\newcommand{\N}{\mathbb{N}}
\newcommand{\R}{\mathbb{R}}
\DeclareMathOperator*{\Span}{span}
\DeclareMathOperator*{\dom}{dom}
\newcommand{\C}{\mathbb{C}}
\numberwithin{equation}{section}
\newcounter{dummy} \numberwithin{dummy}{section}
\newtheorem{defn}[dummy]{Definition}
\newtheorem{rmk}[dummy]{Remark}
\newtheorem{thm}{Theorem}
\newtheorem{cor}[dummy]{Corollary}
\newtheorem{lem}[dummy]{Lemma}
\newtheorem{prop}{Proposition}
\newcommand{\thmref}[1]{Theorem~\ref{#1}}
\newcommand{\lemref}[1]{Lemma~\ref{#1}}
\newcommand{\propref}[1]{Proposition~\ref{#1}}
\newcommand{\corref}[1]{Corollary~\ref{#1}}
\newcommand{\rmkref}[1]{Remark~\ref{#1}}
\newcommand{\Ec}{\mathcal{E}}
\newcommand{\nablaT}{\widetilde{\nabla}}
\newcommand{\KH}{K_{H}}
\newcommand{\Tw}{T_{w}}
\newcommand{\lip}{\text{Lip}}
\newcommand\blfootnote[1]{%
  \begingroup
  \renewcommand\thefootnote{}\footnote{#1}%
  \addtocounter{footnote}{-1}%
  \endgroup
}
\begin{document}

\title{\textsc{dirac operators and geodesic metric on the harmonic sierpinski gasket and other fractal sets}}

\author{Michel L. Lapidus$^1$ \ and \ Jonathan J. Sarhad$^2$}

\maketitle

\begin{abstract}

 We construct Dirac operators and spectral triples for certain, not necessarily self-similar, fractal sets built on curves.  Connes' distance formula of noncommutative geometry provides a natural metric on the fractal.  To motivate the construction, we address Kigami's measurable Riemannian geometry, which is a metric realization of the Sierpinski gasket as a self-affine space with continuously differentiable geodesics.  As a fractal analog of Connes' theorem for a compact Riemmanian manifold, it is proved that the natural metric coincides with Kigami's geodesic metric.  This present work extends to the harmonic gasket and other fractals built on curves a significant part of the earlier results of E. Christensen, C. Ivan, and the first author obtained, in particular, for the Euclidean Sierpinski gasket.  (As is now well known, the harmonic gasket, unlike the Euclidean gasket, is ideally suited to analysis on fractals.  It can be viewed as the Euclidean gasket in harmonic coordinates.)  Our current, broader framework allows for a variety of potential applications to geometric analysis on fractal manifolds.

\end{abstract}


\blfootnote{2010 \emph{Mathematics Subject Classification}. \emph{Primary} 28A80,
 34L40, 46L87, 53C22, 53C23, 58B20, 58B34. \emph{Secondary} 53B21,
 53C27, 58C35, 58C40, 81R60.}

\blfootnote{\emph{Key words and phrases}. Analysis on fractals, noncommutative
 fractal geometry, Laplacians and Dirac operators on fractals,
 spectral triples, spectral dimension, measurable Riemannian geometry, geodesics on
 fractals, geodesic and noncommutative metrics, fractals built on
 curves, Euclidean and harmonic Sierpinski gaskets,
 geometric analysis on fractals, fractal manifold.}

\blfootnote{The research of M. L. Lapidus was partially supported by the National Science Foundation under grant nos.  DMS-0707524 and DMS-110755, as well as by the Institut des Hautes Etudes Scientifiques (IHES) where he was a visiting professor in the spring of 2012 while this paper was completed.\\ 

 \hspace{-4mm} $^1$Department of Mathematics, University of California, Riverside, CA 92521-0135, USA 
 
\hspace{-2.5mm} E-mail address:  lapidus@math.ucr.edu \\ 

\noindent $^2$Department of Biology, University of California, Riverside, CA 92521, USA 

 \hspace{-2.5mm} E-mail address:  jonathan.sarhad@ucr.edu}

\section{Introduction}

In this article, we provide a general construction of a Dirac operator and its associated spectral triple for a large class of sets built on curves, which includes the self-similar Sierpinski gasket, the self-affine harmonic gasket, and other spaces which carry an intrinsic metric.  Using methods from noncommutative geometry, it is shown that the intrinsic metric can be recovered from the spectral triple.  In this sense, there are `target' geometries in mind, which are recovered using the operator-theoretic data contained in a spectral triple.  Informally, this method involves using a space of functions on the underlying space as coordinates.  If the function space is a commutative $C^{*}$-algebra, then one can tease from it a topological space.  This is a consequence of Gelfand's theorem.  If that topological space is metrizable, then more information is needed in order to determine a metric.  Knowledge of a certain Hilbert space of vector fields on the space and a particular differential operator is enough to determine a metric in many instances. This way of constructing a geometry is part of the broader theory of noncommutative geometry. \\

Alain Connes \cite{Con1, Con2} proved that for a compact spin Riemannian manifold, $M$, a triple of objects, called a spectral triple, encodes the geometry of $M$.  The spectral triple consists of the $C^{*}$-algebra of complex-continuous functions on $M$, the Hilbert space of $L^{2}$-spinor fields, and a differential operator called the \textsl{Dirac operator}.  The Dirac operator is constructed from the spin connection associated to $M$ and can be thought of as the square root of the spin-Laplacian (mod scalar curvature).  Connes' formula, though very simple, uses the information from the spectral triple in order to recover the geodesic distance on $M$, and hence the geometry of $M$ (by the Meyers--Steenrod Theorem \cite{Peterson}).  The observation that the Dirac operator defines the geometry is one of Connes' contributions to the field of geometry \cite{Var}.  Indeed, it is a springboard for defining generalized manifolds and geometries in the context of spaces which admit a meaningful generalization of the Dirac operator, but not meaningful generalizations of smooth structure or metric or even paths in the space.  In the absence of spin or even orientability, this result still holds, though the Dirac operator may not be uniquely defined.  The reason for the name noncommutative geometry is that the arguments involved in this result do not rely on the commutativity of the $C^{*}$-algebra, which opens the door to the possibility of defining geometries on noncommutative $C^{*}$-algebras.  The applications of noncommutative geometry in this article, however, stay within the context of the commutative $C^{*}$-algebras of complex-continuous functions on a class of sets. \\

Previous work by Michel Lapidus has provided applications of the methods of noncommutative geometry to fractals.  His program for viewing fractals as generalized manifolds and possibly noncommutative spaces is outlined in \cite{Lap4}.   Building in particular upon \cite{Lap3} and \cite{Lap2}, he investigated in many different ways the possibility of developing a kind of noncommutative fractal geometry, which would merge aspects of analysis on fractals (as now presented, e.g., in \cite{Kig3}) and Connes' noncommutative geometry \cite{Con2}. (See also parts of \cite{Lap2} and \cite{Lap3}.) Central to \cite{Lap4} was the proposal to construct suitable spectral triples that would capture the geometric and spectral aspects of a given self-similar fractal, including its metric structure.  In \cite{CrIv1}, Erik Christensen and Cristina Ivan constructed a spectral triple for the approximately finite-dimensional (AF) $C^*$-algebras. The continuous functions on the Cantor set form an AF $C^*$-algebra since the Cantor set is totally disconnected. Hence, it was quite natural to try to apply the general results of that paper for AF $C^*$-algebras to this well-known example. In this manner, they showed in \cite{CrIv1} once again how suitable noncommutative geometry may be to the study of the geometry of a fractal. Since then, the authors of the present article have searched for possible spectral triples associated to other known fractals. The hope is that these triples may be relevant to both fractal geometry and analysis on fractals. We have been especially interested in the Sierpinski gasket, a well-known nowhere differentiable planar curve, because of its key role in the development of harmonic analysis on fractals. (See, for example, \cite{Barlow}, \cite{Kig3}, \cite{KL1}, \cite{KL2}, \cite{Strichartz}, \cite{Tep1}.)  In \cite{CrIvLap}, Erik Christensen, Christina Ivan and Michel Lapidus applied these noncommutative methods to some fractal sets built on curves---including trees, graphs, and the Sierpinski gasket.  The work in \cite{CrIvLap} on the more complex sets is based largely on the Dirac operator and spectral triple on the circle.  It is important to note that the work in \cite{CrIvLap} on the Sierpinski gasket is with respect to the Sierpinski gasket in the Euclidean metric as opposed to the treatment of the Sierpinski gasket in the harmonic metric of the present paper.  Of many results in \cite{CrIvLap}, the application of noncommutative methods to the Euclidean Sierpinski gasket recovered the geodesic distance, volume measure (in that case, the natural Hausdorff measure), and metric \textsl{spectral} dimension (there, the Hausdorff dimension).\\
   
   The Sierpinski gasket is a fractal set which is not a smooth manifold nor even a topological manifold.  It is shown below in Figure 1 (of Section 2) as it is usually viewed, in the \textsl{Euclidean metric}.  The Sierpinski gasket has a natural metric structure induced by the Euclidean metric in $\R^{2}$, given by the existence of a shortest path (non unique) between any two points. These shortest paths are piecewise Euclidean segments and hence piecewise differentiable, but in general not differentiable.  In \cite{Kig4} (see also \cite{Kig5}), Jun Kigami uses a theory of harmonic functions on the Sierpinski gasket (see, e.g., \cite{Kig3}) in order to construct a new metric space that is homeomorphic to the Sierpinski gasket.  This new space shown below (Figure 2 of Section 2), called the \textsl{harmonic gasket} or the \textsl{Sierpinski gasket in harmonic coordinates}, is actually given by a single harmonic coordinate chart for the Sierpinski gasket.  The harmonic gasket has a $C^{1}$ shortest path (non unique) between any two points.  It is interesting to note that the harmonic coordinate chart smoothes out the Sierpinski gasket.  Kigami \cite{Kig1, Kig4, Kig5}, building on work by Kusuoka \cite{Kus1, Kus2}, has found several formulas in the setting of the harmonic gasket which are measurable analogs to their counterparts in Riemannian geometry.  In particular, he has found formulas for energy and geodesic distance involving measurable analogs to Riemannian metric, Riemannian gradient, and Riemannian volume.  For this reason, this geometry is appropriately called measurable Riemannian geometry.\\ 

In this article, as an example, we recover Kigami's measurable Riemannian geometry using spectral triples.  As in \cite{CrIvLap}, here the basis for the construction of these spectral triples is the spectral triple for a circle:  Finite and countable direct sums of the circle triples are used to construct the desired spectral triples.  We have constructed several spectral triples for the harmonic gasket, all of which recover the geodesic distance on the harmonic gasket as well as on the (Euclidean) Sierpinski gasket.  The general construction provided in \propref{mainprop} below applies to a class of sets built on countable unions of curves in $\R^{n}$ which includes the Sierpinski gasket and harmonic gasket.  The spectral triple on the harmonic gasket provides a \textsl{fractal} analog to Connes' theorem.  Indeed, on the one hand, there is a \textsl{target} geometry that is a fractal analog of Riemannian geometry---Kigami's measurable Riemannian geometry.  On the other hand, there is our construction of the Dirac operator and spectral triple for fractal sets which can be used to recover Kigami's geometry, namely through the Dirac operator.\\

We point out that our results, which make use of and extend the methods of \cite{CrIvLap}, encompass the results of \cite{CrIvLap} concerning the construction of Dirac operators and the recovery of the geodesic metric.  Furthermore, our results allow more flexibility and are better suited to a further development of geometric analysis on fractals.  Indeed, in particular, in light of the results of \cite{Kus1, Kus2, Kig4, Kig5, Tep1, Tep2, Kaj2, Kaj3}, the harmonic gasket (rather than the ordinary Euclidean gasket) is the appropriate model for studying probability theory and harmonic analysis as well as the analog of Riemannian geometry on such a fractal.  Recent developments (some of which are alluded to in Section 7) suggest that many other fractal geometries can be similarly viewed as fractal (Riemannian) manifolds.\\

In the concluding remarks of this article, in addition to providing several additional references relevant to this paper, we discuss work in progress which includes a different construction of a Dirac operator and spectral triple from the ones built from direct sums.  This \textsl{global} Dirac operator is defined directly from Kigami's measurable Riemannian metric and gradient, giving it a stronger resemblance to Connes' Dirac operator on a compact Riemannian manifold.  The Hilbert space of the triple is  constructed from Kigami's $L^{2}$-vector fields on the gasket, again giving a stronger fractal analog to Connes' theorem.  In addition,  the global construction may prove a better starting point for showing that the Dirac operator squares to the appropriate Laplacian in this setting, the Kusuoka Laplacian.  We also discuss two open problems.  These problems, which are inherently linked, are the computation of the spectral dimension and volume measure induced by the spectral triples for the harmonic gasket.\\

The remainder of this paper is organized as follows:\\

In Section 2 are provided various preliminaries concerning spectral triples and Connes' formula, some of the methods and results of \cite{CrIvLap} which we will extend, as well as analysis on fractals (focusing on the Euclidean and harmonic Sierpinski gaskets) and the results of \cite{Kig4} concerning measurable Riemannian geometry, particularly the construction of the geodesic metric and the existence of $C^1$ (but not usually $C^2$) geodesics on the harmonic gasket.\\

In Section 3, we discuss spectral geometry on a new class of fractal sets built on curves.  [In short, they are compact metric length spaces (see Definition 3.1 of Section 3) satisfying two basic axioms.]  These ``fractals'' (which are not necessarily self-similar or even ``self-alike'') include both discrete structures (such as certain infinite trees, as considered in \cite{CrIvLap}) and continuous structures (such as the Euclidean and harmonic gaskets).  We construct Dirac operators and associated spectral triples on such fractals.  We also show that one can recover the natural geodesic metric on them.\\

In Section 4 and Section 5, respectively, we show that the Euclidean gasket and the harmonic gasket belong to the class of fractals introduced in Section 3.  In particular, we deduce from the results obtained in Sections 3 and 4 that the Euclidean geodesic metric on the Sierpinski gasket can be recovered from the spectral triple (as was already done in \cite{CrIvLap}).  Furthermore, we deduce from the results obtained in Sections 3 and 5 the new fact according to which the $C^1$ geodesic metric of \cite{Kig4} can be recovered from the spectral triple constructed in Section 5.\\

In Section 6, we provide several alternative constructions of spectral triples associated with the harmonic gasket and compare the corresponding eigenvalue spectra and spectral dimensions.  We also show that they all induce the same noncommutative metric, namely, the harmonic geodesic metric (just as in Section 5).\\

Finally, in Section 7, as was mentioned in more detail above, we discuss further work connected to various aspects of the present paper, as well as propose several open problems and directions for future research in the area of noncommutative fractal geometry \cite{Lap4} and geometric analysis on fractals.\\

\section{Preliminaries}

\subsection{Spectral triples, Dirac operators, and noncommutative geometry}

 From the perspective of noncommutative geometry, the geometric information of a space is encoded in a triple.  One part of the triple is a $C^{*}$-algebra.  Recall that a $C^{*}$-algebra is a Banach algebra with a conjugate linear involution $*$ satisfying:  $(xy)^{*}=y^{*}x^{*}$ and  $||x^{*}x||=||x^{*}||||x||=||x||^{2}$.
Some relevant examples of $C^{*}$-algebras are the complex numbers $\C$, the complex continuous functions $C(X)$ on a compact Hausdorff space $X$, and the bounded linear operators $B(H)$ on a Hilbert space $H$. \\
 
The Gelfand--Naimark Theorem \cite{Con2, Var} states that every unital commutative $C^{*}$-algebra $\mathcal A$ is $*$-isomorphic to $C(X)$, for some compact Hausdorff space $X$.  The space $X$ is unique, up to homeomorphism.  In fact, $X$ is determined as the set of all pure states (characters) of $\mathcal A$, with the weak $^{*}$-topology  assigned.  Note that if $X$ is a compact metric space, then the weak$^{*}$-topology on the set of pure states is metrizable.  A second, more general, result due to Gelfand and Naimark is that \textsl{any} $C^{*}$-algebra can be faithfully represented in $B(H)$, for some Hilbert space $H$.   \\
  
The Gelfand--Naimark Theorem yields a perspective for partitioning topologies (or geometries) roughly through the following correspondences (modulo Morita equivalence, see \cite{Con2, Var}):

\begin{enumerate}

\item \textsl{commutative topologies/geometries} $\Longleftrightarrow$ commutative $C^{*}$-algebras;

\item \textsl{noncommutative topologies/geometries} $\Longleftrightarrow$ noncommutative $C^{*}$-algebras.

\end{enumerate}

Since $C(X)$ is commutative, we say that $X$ has a commutative topology or geometry.  In this way, one may consider noncommutative 
\textsl{rings of functions} on some `noncommutative spaces'.  The geometries presented in this article are examples of commutative, yet non-classical, geometries.\\

  Specifying a natural or intrinsic distance function on a set or space is central to noncommutative geometry.  In the context of $C^{*}$-algebras, it was first suggested by Connes (\cite{Con1}, see also \cite{Con2}) that from a suitable Lipschitz seminorm one obtains an ordinary metric on the state space of the $C^{*}$-algebra. (See also Reiffel's work in \cite{Rief1, Rief2} and the references therein for further abstractions and extensions of this point of view.)  Let $X$ be a compact metric space with metric $\rho$.  Defined on real-valued or complex-valued functions on $X$, the Lipschitz seminorm, $\lip_{\rho}$,  determined by $\rho$, is given by 
  
\begin{equation} \label{lip1}
\lip_{\rho}(f)=\sup\left\{\frac{|f(x)-f(y)|}{\rho(x,y)} : x\neq y\right\}. 
\end{equation}
  
\noindent The space of \textsl{$\rho$-Lipschitz functions on $X$} is comprised of those functions $f$ on $X$ satisfying $\lip_{\rho}(f)<\infty$.   One can recover the metric $\rho$, in a simple way, from $L_{\rho}$, by the following formula \cite{Rief2}:

\[ \rho(x,y)=\sup\{|f(x)-f(y)|: \lip_{\rho}(f)\leq 1\}.\]

In noncommutative geometry, a standard way to specify the \textsl{suitable} Lipschitz seminorm is via a \textsl{Dirac} operator $D$ on a Hilbert space $H$, the remaining parts of the triple.  Dirac operators have origin in quantum mechanics, but will be defined here in the context of \textsl{unbounded Fredholm modules} and \textsl{spectral triples}.  Following \cite{CrIvLap}, we will use the following definitions (see also, e.g., \cite{ConMarc}):

\begin{defn} let $\mathcal A$ be a unital $C^{*}$-algebra.  An \textbf{unbounded Fredholm module} $(H,D)$ over $\mathcal A$ consists of a Hilbert space $H$ which carries a unital representation $\pi$ of $\mathcal A$ and an unbounded, self-adjoint operator $D$ on $H$ such that
\pagebreak
\begin{enumerate}
\item[i.] the set 
\[\{a\in \mathcal A : [D,\pi(a)]\mbox{ is densely defined \& extends to a bounded operator on }H \}\] 
 is a dense subset of $\mathcal A$,
\item[ii.] the operator $(I+D^{2})^{-1}$ is compact.
\end{enumerate}
\end{defn} 

\begin{defn} Let $\mathcal A$ be a unital $C^{*}$-algebra and $(H,D)$ an unbounded Fredholm module of $\mathcal A$.  If the underlying representation $\pi$ is faithful, then $(\mathcal A, H,D)$ is called a \textbf{spectral triple}.  In addition, $D$ is called a \textsl{Dirac} operator.

\end{defn}

We will denote an unbounded Fredholm module $(H,D)$ over $\mathcal A$ as the triple $(\mathcal A, H, D)$ and call $D$ a Dirac operator, whether or not $\pi$ is faithful.\\

Using the information contained in the spectral triple for a compact spin Riemannian manifold $(M,g)$, Connes' Formula 1 below recovers the geodesic distance and hence the geometry of $(M,g)$.  Let $\mathcal A=C(M)$, $H$ be the Hilbert space of $L^{2}$-spinors, $D$ the Dirac operator associated to the spin connection on $(M,g)$, and let $d_{g}$ be the geodesic distance on $(M,g)$.  Connes' formula can now be stated (\cite{Con1}; \cite{Con2}, p. 544) as follows:

\newtheorem*{CO1}{Formula 1} 

\begin{CO1} For any points $p,q\in M$, we have

\[d_{g}(p,q)=\sup_{a\in \mathcal A}\{|a(p)-a(q)| : ||[D,\pi a]||\leq 1 \} ,\]

\end{CO1}

\noindent where $||.||$ denotes the norm on the space of bounded linear operators on $H$.  \\

We will usually refer to Formula 1 as the spectral distance or the distance induced by the spectral triple via Formula 1.  In all of our applications, $\pi$ is a representation as a multiplication operator and it will be clear that our Dirac operator $D$ satisfies

\[[D,\pi a](g)=\pi Da(g)=(Da)g .\]

\noindent In other words, the commutator operator is multiplication by the function $Da$.  Since the operator norm of a multiplication operator is equal to the essential supremum of the function by which it multiplies, we have

\[ ||[D,\pi a]||=||\pi Da||=||Da||_{\infty, M},\]

\noindent where in general $M$ will be a compact length space in $\R^{n}$.  This allows us to equivalently write the spectral distance as

 \[d_{g}(p,q)=\sup_{a\in \mathcal A}\{|a(p)-a(q)| : ||Da||_{\infty, M}\leq 1 \}. \]
 
\noindent Let $d$ be the metric on $M$ and $||.||_{\infty,M}$ denote the supremum norm on $M$.  Then $d_{g}=d$ if (and only if) $||Da||_{\infty,M}=\lip_{d}(a)$, where $\lip_{d}$ is the Lipschitz seminorm with respect to $d$ (see Equation \eqref{lip1}).  The brief argument for the `if' part of the statement is well known and is given in the proof of \thmref{mainthm} below.  Due to this relationship, several lemmas to follow show, for various settings, that $||Da||_{\infty,M}=\lip_{d}(a)$.  These lemmas allow us to recover the metric $d$ on $M$ as the spectral distance.\\

In \cite{CrIvLap}, an additional definition associated to a spectral triple is used to define the \textsl{spectral dimension} of the spectral triple.  (It is also called the \textsl{metric dimension} in \cite{ConMarc}.) This is a generalization of the dimension of a manifold---and indeed, in the case of a compact Riemannian manifold, recovers the dimension of the manifold \cite{Con2}.  (See also, for example, \cite{ConSull, Lap3, Lap4, KL2, GuidIso1, GuidIso2, CrIv1, CrIv2, CrIvLap, CipSauv, CrIvSc, CipGuidIso1} for the case of fractal spaces.)  This information is contained in the pairing of the Dirac operator and the Hilbert space, in the form of the asymptotics of the eigenvalues of the Dirac operator:\\

\begin{defn}  Let $D$ be the Dirac operator associated to the spectral triple in Definitions 1 and 2.  If $Tr((I+D^{2})^{-p/2})<\infty$ for some positive real number $p$, then the spectral triple is called \textbf{p-summable} or just \textbf{finitely summable}.  The number $\partial_{ST}$, given by

\[\partial_{ST}=\inf\{p>0: tr(D^{2}+I)^{\frac{-p}{2}}<\infty \}, \]

\noindent is called the \textbf{spectral dimension} of the spectral triple. 
\end{defn}

\subsection{Circles, curves, and sets built on curves}

The fundamental building block for spectral triples for fractal sets built on curves in \cite{CrIvLap} is the spectral triple for a circle.  Using circle triples, the authors of \cite{CrIvLap} construct spectral triples for an array of sets.   Let $C_{r}$ denote the circle with radius $r>0$.  In \cite{CrIvLap}, the \textbf{natural spectral triple} for the circle $ST_{n}(C_{r})= (AC_{r}, H_{r}, D_{r})$ is defined as follows:

\begin{enumerate}

\item[I.]  $AC_{r}$ is the algebra of complex continuous $2\pi r$-periodic functions on $\R$;

\item[II.] $H_{r}=L^{2}([-\pi r, \pi r], (1/2\pi r)\mu)$;

\item[III.] $D_{r}=-i\frac{d}{dx}$;

\item[IV.]  The representation $\pi$  sends elements of $AC_{r}$ to multiplication operators on $H_{r}$.

\end{enumerate}

  Note that $H_{r}$ has a canonical orthonormal basis given by $\exp\left(\frac{ikx}{r}\right)$, where $i=\sqrt{-1}$.  The operator $D_{r}$ is actually defined as the closure of the restriction of the above operator to the linear span of the basis. Then $D_{r}$ is self-adjoint and 

\[[D_{r},\pi_{r}(f)]=\pi_{r}(-iDf) \hspace{3mm} \mbox{  or just  } \hspace{3mm}  -iDf .\]

\noindent for any $C^{1}$ $2\pi  r$-periodic function $f$ on $\R$.  Thus the natural spectral triple is a spectral triple, and the eigenvalues of the Dirac operator are given as $\lambda_{k}=k/r$ for $k\in \mathbb Z$. \\

To use the circle triple as the basis for constructing spectral triples on more complex sets, it will be necessary to take countable sums of circle triples.  To avoid the problem of having $0$ as an eigenvalue with infinite multiplicity, the translated spectral triple is used in \cite{CrIvLap}:

\begin{enumerate}

\item[1.] Let $D^{t}_{r}=D_{r} + \frac{1}{2r}I$.

\item[2.] $ST_{t}(C_{r})=(AC_{r}, H_{r}, D_{r}^{t})$ is called the  \textbf{translated spectral triple} for the circle.

\end{enumerate}

The set of eigenvalues becomes $\{(2k+1)/2r: k\in \mathbb Z \}$, but the domain of definition stays the same and most importantly, as to not change the effect of the spectral triple,

\[[D^{t}_{r},\pi_{r}(f)]=[D_{r},\pi_{r}(f)] .\]

Let $d_{c}$ be the geodesic distance function on the circle.  Theorem 2.4 in \cite{CrIvLap} gives the following results:

\begin{itemize}

\item The metric induced by the spectral triple $ST_{n}(C_{r})$ coincides with the geodesic distance on $C_{r}$, i.e.,

\[d_{c}(s,t)=\sup\{|f(t)-f(s)|: ||[D_{r},\pi_{r}(f)]||\leq 1\}; \]

\item The circle triple is $p$-summable for any real $s>1$ but not summable for $s=1$, thus the spectral dimension of the spectral triple is $1$, coinciding with the dimension of a circle.

\end{itemize}

The interval is studied by means of the circle---by taking two copies of the  interval and gluing the endpoints together.  There is an injective homomorphism $\Psi$ from the continuous functions on an interval $[0,\alpha]$ to the continuous functions on $[-\alpha, \alpha]$ defined by 

\[\Psi_{\alpha}(f)(t)=f(|t|) .\]

The circle triple $(AC_{\alpha/ \pi}, H_{\alpha/ \pi}, D^{t}_{\alpha/ \pi})$ is then used to describe the spectral triple for $C([0,\alpha])$.  The fact that the following definition indeed defines a spectral triple follows immediately from the results on the circle:

 For $\alpha>0$, the \textbf{$\alpha$-interval spectral triple} $ST_{\alpha}=(\mathcal A_{\alpha}, H_{\alpha}, D_{\alpha})$ is given by the following data:

\begin{enumerate}

\item[i.] $\mathcal A_{\alpha}=C([0,\alpha])$;

\item[ii.] $\mathcal H_{\alpha}=L^{2}([-\alpha, \alpha],m/2\alpha)$, where $m/2\alpha$ is the normalized Lebesgue measure;

\item[iii.] the representation $\pi_{\alpha}:\mathcal A_{\alpha}\rightarrow B(\mathcal H_{\alpha})$ is defined for $f$ in $\mathcal A_{\alpha}$ as the multiplication operator which multiplies by the function $\Psi_{\alpha}(f)$;

\item[iv.] an orthonormal basis $\{e_{k}: k\in \mathbb Z\}$ for $\mathcal H_{\alpha}$ is given by $e_{k}=\exp(i\pi kx/\alpha)$ and $D^{t}_{\alpha}$ is the self-adjoint operator on $\mathcal H_{\alpha}$ which has all the vectors $e_{k}$ as eigenvectors and such that $D^{t}_{\alpha}e_{k}=(\pi (2k+1)/2\alpha)e_{k}$ for each $k\in \mathbb Z$.  Thus the eigenvalues of $D_{\alpha}^t$ are $\lambda_{k}=(\pi (2k+1)/2\alpha)$ for each $k\in \mathbb Z$.

\end{enumerate}

Let $d_{\alpha}(s,t)=|s-t|$ be the geodesic distance for the $\alpha$-interval.  Results for the $\alpha$-interval spectral triple, which follow immediately from the results for the circle, are stated in Theorem 3.3 in \cite{CrIvLap}:
 
\begin{itemize}

\item The metric induced by the $\alpha$-interval triple coincides with the geodesic distance for the $\alpha$-interval, i.e.,

\[d_{c}(s,t)=\sup\{|f(t)-f(s)|: ||[D_{\alpha},\pi_{\alpha}(f)]||\leq 1 \};\]

\item The $\alpha$-interval triple is $p$-summable for $s>1$ but not summable for $s=1$, thus it has spectral dimension $1$, coinciding with the dimension of the $\alpha$-interval.

\end{itemize}

Let $T$ be a compact Hausdorff space and $r:[0,\alpha]\rightarrow T$ a continuous injective mapping.  The image in $T$ will be called the continuous curve and $r$ the parameterization.  The \textbf{r-curve triple}, $ST_{r}$, is given by the interval triple as follows:\\

Let $r$ be as above and $(A_{\alpha}, H_{\alpha}, D^{t}_{\alpha})$ be the $\alpha$-interval spectral triple.  Then $ST_{r}=(C(T), H_{\alpha}, D^{t}_{\alpha})$ is an unbounded Fredholm module with representation $\pi_{r}: C(T)\rightarrow B(H_\alpha)$ defined via a homomorphism $\phi_{r}$ of $C(T)$ onto $A_{\alpha}$ given by

\begin{enumerate}

\item[a.] For all $f\in C(T)$, for all $t\in [0,\alpha]$, $\phi_{r}(f)(t) \vcentcolon= f(r(t))$;

\item[b.] For all $f\in C(T)$, $\pi_{r}(f) \vcentcolon= \pi_{\alpha}(\phi_{r}(f))$.

\end{enumerate}

\begin{rmk} \label{rmk1}
We will use the $r$-curve triple quite often;  so it is convenient to use the notation for its Dirac operator, $D_{r}=D^{t}_{\alpha}$.  Moreover, if there are curves $r_{j}$, and it is clear we are using the $r_{j}$-curve triples, then we will use $D_{j}=D_{r_{j}}$.  Note that from {\normalfont{(}}iv{\normalfont{)}} above, the eigenvectors of $D_{r}$ are  $e_{k}=\exp(i\pi kx/\alpha)$ with corresponding eigenvalues  $\lambda_{k}=(\pi (2k+1)/2\alpha)$, for each $k\in \mathbb Z$.
\end{rmk}

As is expected, the curve triple is summable for $s>1$ but not for $s=1$;  so its spectral dimension is $1$ (see Proposition 4.1 in \cite{CrIvLap}).  One can recover a metric distance on the image of the curve in $T$, of course dependent of parameterization.  If $T$ is a metric space, then a parameterization can be chosen so that the recovered metric distance coincides with the metric distance inherited from $T$ (see Proposition 4.3 in \cite{CrIvLap}).\\

The applications in \cite{CrIvLap} focused on sets built on curves, including finite collections of curves in a compact Hausdorff space, parameterized graphs, trees, and the Sierpinski gasket.   The general method for constructing triples for these sets given in \cite{CrIvLap} is by taking sums of triples for curves (circles, intervals).  Let $\left\{R_{j}\right\}_{j}$ be a collection of curves in a space $T$ (e.g., compact Hausdorff space, compact metric space, compact subspace of $\R^N$).  Then, following \cite{CrIvLap}, the triple for the union of these curves is, in general, given by\\

\[S=\left(C(T), \bigoplus_{j}H_{j}, \bigoplus_{j}D_{j} \right).\]

\noindent If $T$ is a compact Hausdorff space, then (even with a finite collection of rectifiable curves) it is not always true that $S$ is an unbounded Fredholm module.  However, in the case when there are finitely many rectifiable curves which pairwise intersect at finitely many points, $S$ is an unbounded Fredholm module (\cite{CrIvLap}, Prop. 5.1).  This type of construction is used in \cite{CrIvLap} for parameterized (finite) graphs, infinite trees, and for the self-similar Sierpinski gasket, with $T$ considered as a subspace of Euclidean space.  In the case of the Sierpinski gasket embedded in the $2$-dimensional Euclidean space $\R^{2}$, a countable sum of circle triples forms a spectral triple for the gasket.  The countable collection of circles corresponds to the countable collection of triangles, whose closure forms the Sierpinski gasket.  The spectral dimension is computed as $\log 3/\log 2$, which corresponds to its classic fractal dimension(s).  The spectral distance function recovers the Euclidean-induced geodesic distance and the (renormalized) standard measure on the gasket is recovered via the Dixmier trace \cite{CrIvLap}.  The construction of the spectral triple for the gasket and the recovering of its geometric data from the spectral triple is streamlined, due to its self-similarity.  One of the motivating factors for this article is to generalize such constructions and results to possibly non-self-similar sets built on curves, including the harmonic (Sierpinski) gasket which is perfectly suited to study analysis on the ordinary Euclidean (Sierpinski) gasket.  In addition, \propref{mainprop} in the current article unifies the constructions for many of the applications in \cite{CrIvLap}.  By considering $T$ as a compact length space in $\R^N$, and without any assumptions on the intersections of curves, we provide (in \propref{mainprop}) a spectral triple construction for a large class of sets built on countable collections of curves which includes both the Sierpinski gasket and the harmonic gasket (see Axiom 1 below).  In the next subsection, we conclude the preliminaries with definitions of the Sierpinski gasket and the harmonic gasket, as well as a discussion of the `measurable Riemannian geometry' of the Sierpinski/harmonic gasket.

\subsection{Sierpinski gasket and harmonic gasket}

The most common and intuitive presentation of the Sierpinski gasket is as a solid equilateral triangle which has a smaller equilateral triangle removed from its center, and again an even smaller triangle removed from each of the three remaining triangles and so on, ad infinitum, as seen in Figure 1.  This is done a countable number of times, and the closure of this process is called the Sierpinski gasket.  See the left side of Figure 2 for a high approximation of the Sierpinski gasket.\\

\begin{figure}[t]
\begin{center}
\begin{tabular}{lr} 
\includegraphics[scale=.51]{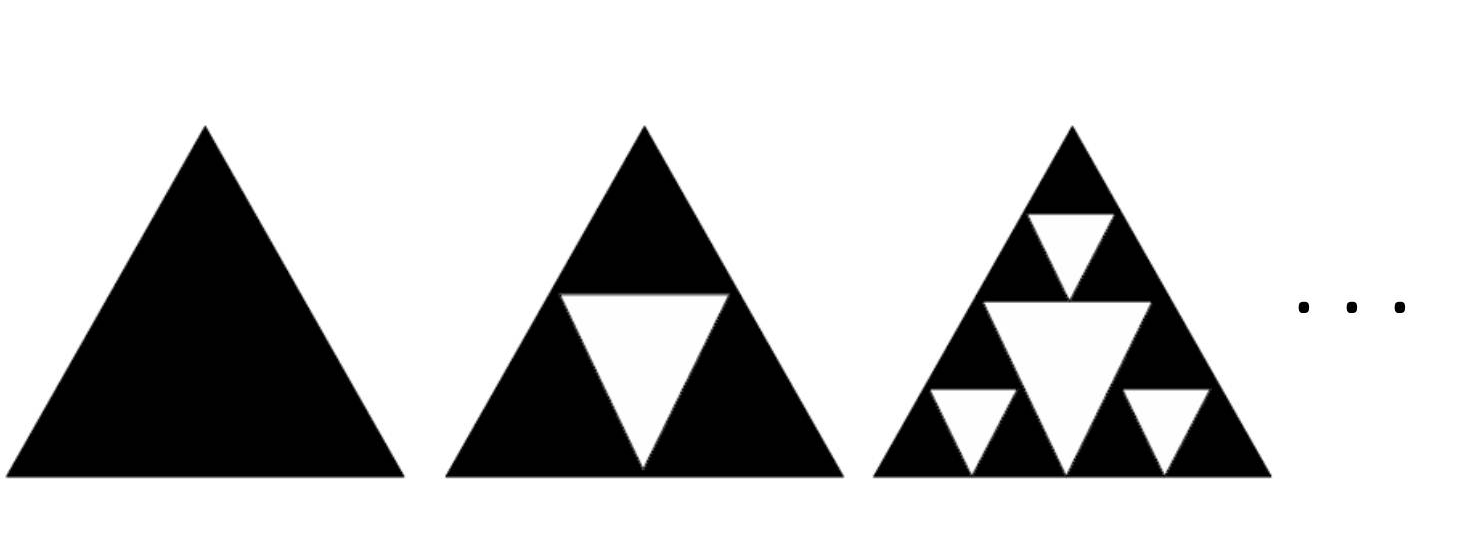} &  
\end{tabular}
\caption{\textbf{Figure 1.} Construction of the Sierpinski gasket by the removal of triangles}
\end{center}
\label{fig:RE}
\end{figure}

Considering the gasket in stages, or as approximations, is intuitive but also fundamental to defining additional structure on the gasket.  \emph{Graph approximations} will be the starting point for defining measure, operators, harmonic functions, etc. on the gasket.\\

The Sierpinski gasket is well described analytically as the unique fixed point of a certain contraction mapping on a metric space.  The contraction mapping to be defined is composed of three contraction mappings of $\R^{2}$ that will allow for analysis, not just on graph approximations, but on arbitrarily small \textsl{portions} of the gasket, called cells.  \\

Although continuity inherited from the Euclidean topology of the plane naturally connects with the analysis of the gasket, it is not critical to the definitions of measure, operators, harmonic functions, etc.  (In fact, it turns out that harmonic functions, defined exclusively in terms of graphs, are necessarily continuous functions in the Euclidean induced topology of the gasket.)  To generate the desired structure on the gasket, Euclidean neighborhoods are replaced with graph neighborhoods.  To begin, we define the following contractions on the plane:

\[F_{i}x=\frac{1}{2}(x-p_{i})+p_{i}\]

\[ i=1,2,3 \hspace{5mm} ; \hspace{5mm}  p_{i} \mbox{ is a vertex of a regular 3-simplex, } \mathcal{P}.\]

The Sierpinski gasket is the unique nonempty compact subset of $\R^{2}$ such that $K=\bigcup_{i=1}^{3}F_{i}(K)$.  For any integer $m\geq 1$, let $w$ be the multi-index given by $w=(w_{1},...,w_{m}), \hspace{5mm} w_{j}\in\{1,2,3\}$ and $F_{w}$ be given by $F_{w}=F_{w_{1}}\circ \cdots \circ F_{w_{m}}.$  Then $K$ satisfies $K=\bigcup_{|w|=m}F_{w}(K)$.  This is called the decomposition of $K$ into $m$-cells, with $F_{w}(K)$ being the $m$-cell given by $w$, denoted $K_{w}$.  Note that $K_w$ is a subset of $K$.  

The multi-index $w$ also provides a convenient addressing system for points of $K$, using \textsl{words} whose letters are elements of the set $S=\{1,2,3\}$.  Let $\Sigma= S^{\mathbb{N}}$, $W_{0}=\{\emptyset\}$, and $W_{m}= S^{m}$ for $m>0$.  (Note that for $m\geq 0$,  $W_m$ is the set of all words of length $m$.)  The set of all words of finite length is $W^{*}=\bigcup_{m\geq 0}W_{m}$.  To describe the identification of words with points of $K$, it is useful to define the \textsl{vertices} $V^*$ of $K$ given by $V^{*}= \bigcup_{m\geq 0}V_{m}$, where $V_{0}=\mathcal{P}=\{p_{1}, p_{2}, p_{3}\}$ and $V_{m}= \bigcup_{w\in W_{m}}F_{w}(V_{0})$.  Consider $\Sigma$, the set of infinite words, to have the standard metric topology on sequences and $K$ to have the Euclidean topology inherited from the plane.  Then it is well known (see, e.g., \cite{Kig3, Strichartz}) that there is a continuous surjection $\pi: \Sigma \rightarrow K$ such that

\[\pi(w)=\bigcap_{m\geq 0}K_{w_{1},...,w_{m}} \]

\noindent and 

\[|\pi^{-1}(x)|= \left\{\begin{array}{ll}

2, & x\in V^{*}-V_{0}\\ 

1, & otherwise. 

\end{array} \right. \]

Graph approximations of $K$ and their associated vertices are central to all further analysis of the gasket.  The \textsl{$m^{th}$-level graph approximation}, $\Gamma_{m}$, is given by $\Gamma_{m}=\bigcup_{|w|=m}F_{w}(V_{0})$ and has vertices $V_m$.  Thus $V^*$ is the union of the vertices of all graph approximations.  A \textsl{graph cell}, $\Gamma_w$, is defined as $\Gamma_w=V_{0}$ for $|w|=0$ and $\Gamma_w=F_w(V_{0})$ for $|w|>0$.  The transition from analysis on graphs to analysis on $K$ comes readily since $V^*$ is a dense subset of $K$.  The functions on $K$ that we will consider will be continuous (in the Euclidean subspace topology) and therefore they will be completely determined by their values on the collection of vertices.  Graph cells, however, play a special role in certain spectral triple constructions, particularly in their identification with the triangles they define in $\R^2$.   For this reason, we will always identify a graph cell $\Gamma_w$, as a set, with the triangle it defines in $\R^2$. \\ 

The energy form on $K$ is constructed from \textsl{graph energies}, independent of a notion of Laplacian or differential operators.  The \textsl{graph energy form on $\Gamma_{m}$}, $\Ec_m[u,v]$, is given by (\cite{Kig3, Strichartz})

\[\Ec_{m}(u,v)=\left(\frac{5}{3}\right)^{m}\sum_{p\cong q: p,q\in V_{m}}{(u(p)-u(q))(v(p)-v(q))},\]

\noindent where $V_m$ is the set of vertices of $\Gamma_m$ and for $p,q \in V_m$, the notation $p\cong q$ means that $p$ and $q$ are neighbors in the finite graph $\Gamma_m$.  \textsl{The energy form}, $\Ec(u,v)$, on $K$ is then given by $\Ec(u,v)=\lim_{m \rightarrow \infty}\Ec_{m}(u,v)$ with the \textsl{energy}, $\Ec(u)$, on $K$ given by
$\Ec(u)=\Ec(u,u)$. \\

Since $\Ec_{m}$ is a non-decreasing sequence, the above limit defining $\Ec(u,v)$ exists and is finite by design, for all $u,v\in \dom\Ec=\{u\in C(K) \hspace{3mm} | \hspace{3mm} \lim_{m\rightarrow \infty}\Ec_{m}(u,u)<\infty\}$.  The expression for the graph energies has several motivations.  Kusuoka in \cite{Kus1, Kus2}, and Goldstein in \cite{Gold}, have independently constructed the Brownian motion on the Sierpinski gasket as a scaling limit of random walks.  To view the energy as an analytic counterpart to Brownian motion on the gasket, one must think of it is a Dirichlet form (see \cite{Barlow, Kig1, Kig2, Kig3, Kus1, Kus2}).  Other physical interpretations of the energy are provided in terms of electrical resistance networks (see \cite{Kig2, Kig3}), as well as in terms of systems of springs attached to point masses assigned to graph vertices 
(see \cite{Ram1, Ram2} and, e.g., \cite{Strichartz}).  \\

The theory of harmonic functions on $K$ is a generalization of classical harmonic theory in which there are the standard equivalences:  1) $u$ is harmonic; 2) $u$ is an energy minimizer, for given boundary values; 3) $u$ has the mean value property;  4) $\Delta u = 0$.  A suitable springboard for harmonic theory on $K$ is that of energy minimization.  It is the case that a harmonic function defined in this way will enjoy a mean value property as well the Laplacian condition.    To be precise, let $u$ be defined on $V_{0}$.  (Here, $V_{0}$ should be thought of as the `boundary' of $K$.)  Then, there is a unique extension of $u$ from $V_{0}$ to $V_{m+1}$, denoted $\hat{u}$, which minimizes the energy $E_{m+1}$ with the relation

\[E_{0}(u)=\left(\frac{5}{3}\right)^{m}E_{m+1}(\hat{u}). \]

\noindent The function $\hat{u}$ is called the \textsl{harmonic extension} of $u$.  Given values of a function $u$ on $V_{0}$, $u$ can be uniquely extended harmonically to $V_{m}$ for any $m$ and therefore can be extended to $V^{*}$.  The function $\hat{u}$, defined in this way, is (uniformly) continuous on $V^{*}$ which is dense in $K$ with respect to the Euclidean inherited topology.  Hence, $\hat{u}$ extends uniquely to a function $u$ on $K$, called a \textsl{harmonic function} on $K$.\\

Note that the harmonic function $u$, is uniquely determined by its boundary value, $u|_{V_{0}}$.  Let the space of harmonic functions be denoted by $\mathcal{H}$. In this case, $\mathcal{H}$ forms a $3$-dimensional linear space which we can identify with $\R^{3}$ by associating $u\in \mathcal{H}$ to the triple $(u(p_{1}),u(p_{2}),u(p_{3}))$ in $\R^{3}$.  Moreover, modding out $\mathcal{H}$ by the constant functions on $K$, we have
$\mathcal{H}/\{\mbox{constant functions}\} \cong \R^{3}/\{\Span(1,1,1)\}$.  Note that the right side is the $2$-dimensional subspace of $\R^{3}$,

\[M_{0} \vcentcolon= \{(x,y,z)\in \R^3 \hspace{2mm}|\hspace{2mm} x+y+z=0 \}.\]

 The Sierpinski gasket is not a smooth, nor even a topological manifold; yet, we can look at it as a space to be geometrized.  The analysis has been based on graphs and the neighbor relation so that the bending, stretching, and twisting of $K$ away from how it sits in the flat plane, while preserving the neighbor relations of vertices, does not affect the analysis.  So even though the standard visualization of $K$ is in the plane, this perspective begs to see $K$ as a more abstract object, \textsl{awaiting} a metric.\\
 
    In this section, $K$ is assigned or geometrized by the \textsl{harmonic metric} to become the `geometric' space called the \textsl{harmonic gasket} (or sometimes, the \textsl{harmonic Sierpinski gasket}) and denoted $K_{H}$, a particular geometric realization of $K$. The latter perspective hints at $K$ and $K_{H}$ as being distinct spaces equipped with their own geometries:  $K$ with the geometry implied by its specific manner of inclusion in the Euclidean plane, and $K_{H}$ with the geometry implied by its configuration in the plane $M_{0}$ in $\R^{3}$.  The harmonic gasket is defined using the space of harmonic functions, $\mathcal{H}$.  Recall that a harmonic function, $h$, is determined uniquely by its values on $V_{0}$.  Identifying $\mathcal{H}$ with $\R^{3}$, take $\{h_{1}=(1,0,0), \hspace{5mm} h_{2}=(0,1,0), \hspace{5mm} h_{3}=(0,0,1)\}$,  as a basis for $\mathcal{H}$.  In terms of the evaluation of harmonic functions, this is equivalent with $h_{i}(p_{j})=\delta_{ij}$ for $i,j=1,2,3$ and $p_{j}\in V_{0}$. The final step in the construction of the harmonic gasket is to use $h_{1},h_{2},$ and $h_{3}$ as a single `coordinate chart' for $K$ in the plane $M_{0}$.  Kigami \cite{Kig1} (see also \cite{Kig4}) defines the following map,

\[\Phi: K\rightarrow M_{0}\]

\[\mbox{by  }\hspace{5mm} \Phi(x)=\frac{1}{\sqrt{2}}\left(\left(\begin{array}{l} h_{1}(x)\\h_{2}(x)\\h_{3}(x)\end{array}\right) -\frac{1}{3}\left(\begin{array}{l} 1\\1\\1\end{array}\right)\right), \]

\noindent which is a homeomorphism onto its image (see Figure 2).  Then $K\cong \Phi(K)\equiv K_{H}$ defines the \textsl{harmonic gasket} or \textsl{Sierpinski gasket in harmonic metric}.  Though $K_{H}$ is not a self-similar fractal, it is \textsl{self-affine} and can be given as the unique fixed point of a certain contraction mapping, induced by the iterated function system $\{H_{i}\}_{i=1}^{3}$ defined below.  The homeomorphism $\Phi$ preserves compactness, so that $K_{H}$ is a compact subset of $M_{0}$.  To be precise, let $P$ be the orthogonal projection from $\R^{3}$ to $M_{0}$.  Let 

\[q_{i}=\frac{P(e_{i})}{\sqrt{2}} \mbox{ for  } i=1,2,3, \]

\noindent where $\{e_{i}\}_{i=1}^{3}$ is the standard basis for $R^{3}$.  The  $q_{i}$'s form a $3$-simplex in $M_{0}$.  For each $i=1,2,3$, choose $f_{i}$ such that

\[\left\{\frac{q_{i}}{|q_{i}|},f_{i}\right\}\]

\noindent gives an orthonormal basis for $M_{0}$.  Also, define the maps $J_{i}: M_{0}\rightarrow M_{0}$ by

\[J_{i}(q_{i})=\frac{3}{5}q_{i}  \mbox{  and  } J_{i}(f_{i})=\frac{1}{5}f_{i}. \]

\noindent Using the $J_{i}$'s, define the following contractions $H_{i}:M_{0}\rightarrow M_{0}$ by $H_{i}(x)=J_{i}(x-q_{i}) + q_{i} \mbox{  for  } i=1,2,3.$  The harmonic gasket, $K_{H}$, is then the unique nonempty compact subset of $M_{0}$ such that $K_{H}=\bigcup_{i=1}^{3}H_{i}(K_{H})$.  \textsl{Recall that, unlike $K$, which is self-similar, $K_{H}$ is only self-affine.}  The contractions $H_{i}$ naturally relate to the contractions $F_{i}$ used to define $K$ via the homeomorphism $\Phi$ which commutes with the contractions, in the sense that $\Phi\circ F_{i} = H_{i}\circ \Phi$ for $i=1,2,3$.  The graph approximations of $K_{H}$ can be attained through $\Phi$ from the $F_{i}$'s or directly from the $H_{i}$'s as in the case of $K$.  See Figure 2 for a comparison of the Sierpinski and harmonic gaskets.\\

In the sequel, we denote by $J_{w}$ the linear map obtained by composing the $J_{i}$'s corresponding to the finite word $w$.  Specifically, $J_{w}=J_{w_{1}} \circ \cdots \circ J_{w_{m}}$ if $w=w_{1} \cdots w_{m} \in W_{m}$.

\begin{figure}
\framebox{
\begin{tabular}{ccc}

\hspace{5mm} Sierpinski Gasket & \hspace{4mm} homeomorphism & \hspace{-4mm} harmonic gasket\\

\begin{minipage}[c]{3.94cm}
\begin{center} \hspace{-2mm}
\includegraphics[scale=.23]{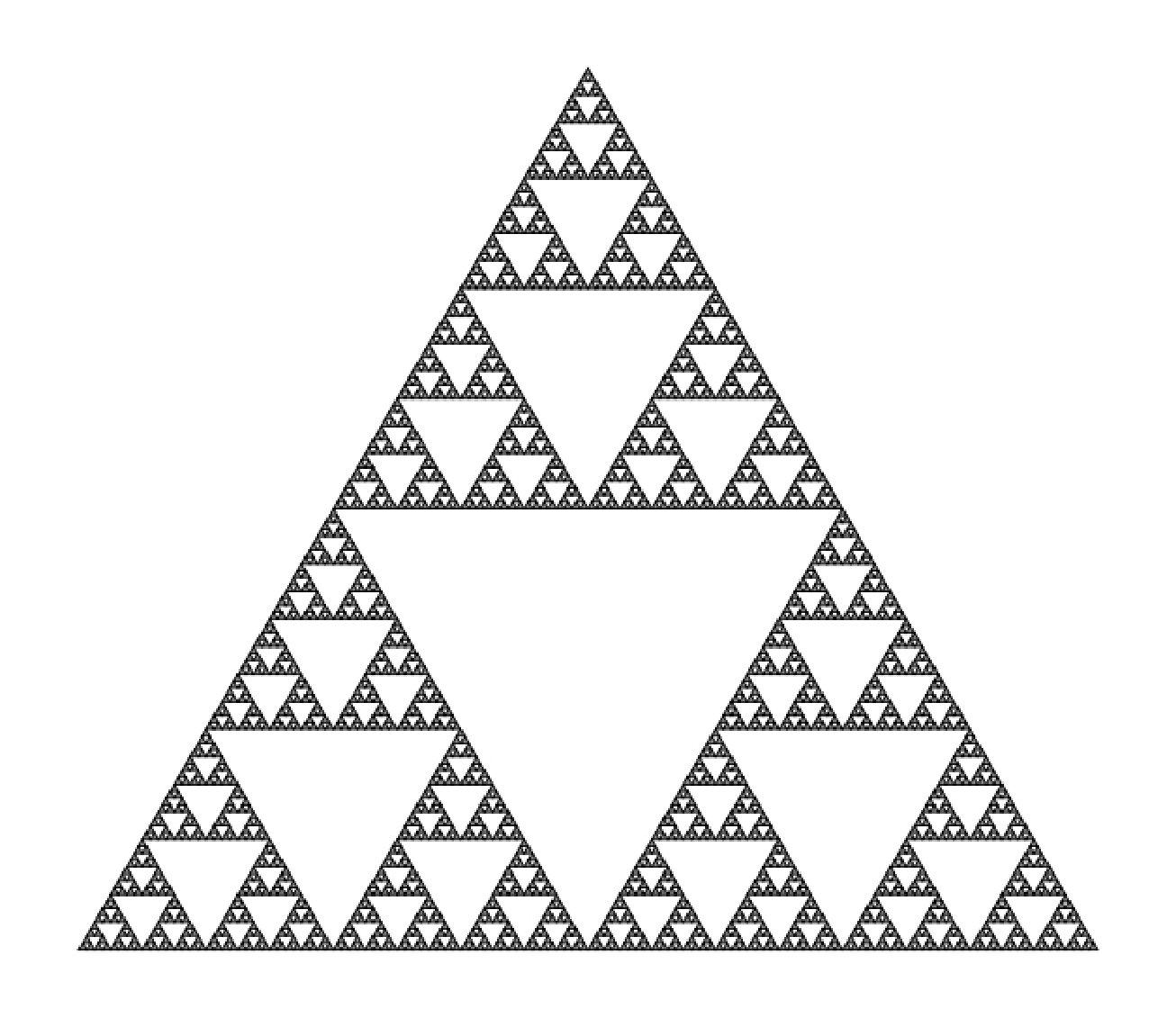}
\end{center}
\end{minipage}

 & 

\begin{minipage}[c]{.94cm} 

\begin{center} \Huge{$\stackrel{\Phi}{\longrightarrow}$} \end{center} 

\end{minipage}  

&

\begin{minipage}[c]{3.94cm}
\begin{center} \hspace{-6mm}
\includegraphics[scale=.15]{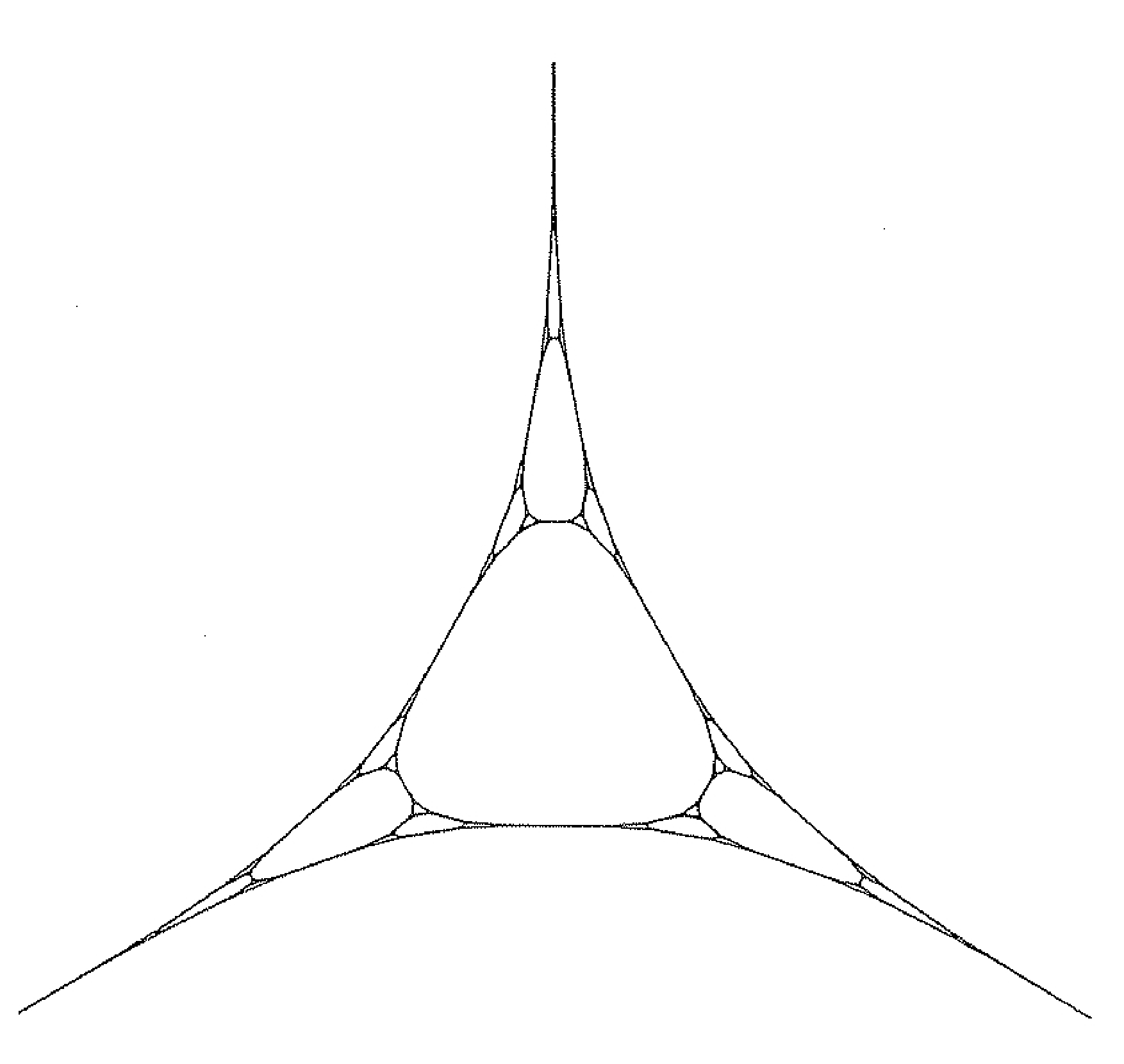}
\end{center}
\end{minipage}

\end{tabular}}

\caption{\textbf{Figure 2.} The Sierpinski gasket $K$ is pictured on the left and the `harmonic gasket' $\KH$ pictured on the right is the homeomorphic image of $K$ by $\Phi$, which is a `harmonic coordinate chart' for the Sierpinski gasket. }

\end{figure}

\subsection{Measurable Riemannian geometry}

The primary ingredients of Kigami's prototype for a \textsl{measurable Riemannian geometry} are the \textsl{measurable Riemannian structure} and geodesic distance;  see \cite{Kig4}.  The measurable Riemannian structure is due to Kusuoka \cite{Kus1} and  is a triple $(\nu,Z,\nablaT)$, where $\nu$ is the Kusuoka measure on $K$, $Z$ is a non-negative symmetric matrix, and $\nablaT$ is an operator analogous to the Riemannian gradient.  More precisely, Kusuoka has shown in \cite{Kus1, Kus2} that for any $u$ and $v$ in the domain $\dom\Ec$ of the energy functional on the Sierpinski gasket, $K$, we have\\

\[\Ec(u,v)=\int_{K}{(\nablaT u, Z\nablaT v)d\nu}, \]

\noindent where $Z$, $\nablaT u$, and $\nablaT v$, are $\nu$-measurable functions defined $\nu$-a.e. on $K$;  see also \cite{Kig1} and \cite{Kig4}.  The equality above is analogous to its \textsl{smooth} counterpart in Riemannian geometry, and thus gives validity to the title `measurable Riemannian structure' for $(\nu,Z,\nablaT)$.  Here, the Kusuoka measure $\nu$ is the analog of the Riemannian volume and $Z$ is the analog of the Riemannian metric.  In \cite{Kig4}, Kigami furthers the likeness to Riemannian geometry by introducing a notion of smooth functions on $K$, as well as a theorem relating the Kusuoka gradient to the usual gradient on the Euclidean plane (see also \cite{Kig1}), and a notion of geodesic distance on $K$, which is realized by a $C^{1}$ path in the plane.  \\

The Sierpinski gasket, in Euclidean or standard metric does not have $C^{1}$ paths between points, in general.  In order to get $C^{1}$ paths, Kigami views the gasket in harmonic coordinates, as the harmonic gasket described earlier.  The harmonic gasket, $K_{H}$, does have $C^{1}$ paths between any two points.  Then via the homeomorphism, $\Psi$, a geodesic distance, realized by a $C^{1}$ path on $K_{H}$ of minimal length, is attached to $K$.  (It is noteworthy that such a geodesic path is $C^{1}$ but not usually $C^{2}$;  see \cite{Tep1}.)\\

 The Kusuoka measure is the measurable analog of Riemannian volume.  The existence of the Kusuoka measure $\nu$ on $K$ is due to Kusouka \cite{Kus1}.  Further details on the Kusuoka measure can be found in \cite{Kus1}, \cite{Kus2}, \cite{Kig1}, \cite{Kig4}, \cite{Tep1} and \cite{Tep2}.\\
 
The measurable analog of the Riemannian metric, or the \textsl{measurable Riemannian metric} $Z$, is also due to Kusuoka \cite{Kus1, Kus2}.  In Proposition 2.11 of \cite{Kig4}, the definition of $Z$ is given as follows:  Let $w\in W_{m}$ and define $Z_{m}(w)=J_{w}^{t}(J_{w})/||J_{w}||_{HS}^{2}$, where $J_{w}^{t}$ is defined in terms of the transpose (or the adjoint) of $J_{w}$ and $||J_{w}||_{HS}$ denotes the Hilbert--Schmidt norm of $J_{w}$.  Then $Z(w)=\lim_{m\to\infty}Z_{m}(w_{1}...w_{m})$ exists $\nu$-a.e. for $w\in \Sigma$, rank$Z(w)=1$ and $Z(w)$ is the orthogonal projection onto its image for $\nu$-a.e. $w\in \Sigma$. \\

 In order to define the metric on $K$, let $Z_{*}(x)=Z(\pi^{-1}(x))$, where $\pi$ was defined in Subsection 2.3.  Then $Z_{*}$ is well defined, has rank 1 and is the orthogonal projection onto its image for $\nu$-a.e. $x\in K$.  Similar as with the Kusuoka measure, the $*$ is dropped and $Z$ is used instead of $Z_{*}$.  It also holds that $Z$ is well defined on $V_{*}$, since for $x\in V_{*}$ and $\pi^{-1}(x)=\{w,\tau\}$, we have $Z(w)=Z(\tau)$;  see \cite{Kig4}.\\

There are a few characterizations of the gradient in the setting of the measurable Riemannian structure. The first we will mention is due to Kusuoka \cite{Kus1}.  In Theorem 2.12 in \cite{Kig4}, Kigami gives Kusuoka's result which is the existence of an assignment $\nablaT: \dom \Ec\rightarrow \{Y \hspace{1mm} | \hspace{1mm} Y: K\rightarrow M_{0}, \hspace{1mm} Y \mbox{  is  } \nu\mbox{-measurable}\}$ such that 

\[\Ec(u,v)=\int_{K}{(\nablaT u, Z\nablaT v)d\nu}, \]

\noindent for any $u,v\in \dom \Ec$.   \\

Kigami's approach to the gradient on $K$ is to start with the usual gradient on open subsets of the plane $M_{0}$ which contain $K_{H}$.  More precisely, fixing an orthonormal basis for $M_{0}$ and identifying $M_{0}$ with $\R^{2}$,  the gradient on $M_{0}$ is given by $\nabla u= {}^t(\partial u/\partial x_{1},\partial u/\partial x_{2})$.  In Proposition 4.6 of \cite{Kig4}, it is shown that if $U$ is an open subset of $M_{0}$ which contains $K_{H}$, $v_{1},v_{2}\in C^{1}(U)$, and $v_{1}|_{K_{H}}=v_{2}|_{K_{H}}$, then $(\nabla v_{1})|_{K_{H}}=(\nabla v_{2})|_{K_{H}}$.  In this sense, the gradient of a \textsl{smooth function} on $K_{H}$ is well defined by the restriction of the usual gradient to an open subset of $M_{0}$.  Then, using $\Phi$, this theory can be pulled back to $K$. Precisely, in \cite{Kig4}, Kigami defines the space $C^{1}(K)$ given by
\[\{u : u=(v|_{K_{H}})\circ \Phi, \mbox{  where  } v \mbox{  is  } C^{1} \mbox{  on an open subset of }  M_{0} \mbox{   containing  }  K_{H} \}\] 

\noindent and for $u\in C^{1}(K)$, 

\[\nabla u = (\nabla v|_{K_{H}})\circ \Phi.  \]

\noindent In Theorem 4.8 of \cite{Kig4}, the following results are established:

\begin{enumerate}

\item $C^{1}(K)$ is a dense subset of $\dom\Ec$ under the norm $||u||=\sqrt{\Ec(u,u)} +||u||_{\infty, K}$;

\item $\nablaT u=Z\nabla u$ for any $u\in C^{1}(K)$;

\item $\displaystyle{\Ec(u,v)=\int_{K}{(\nabla u, Z\nabla v)d\nu}}$ \hspace{5mm} for any $\displaystyle{u,v\in C^{1}(K)}$.

\end{enumerate}

\noindent Thus Kigami shows that his gradient, $\nabla$, `essentially' coincides with the Kusuoka gradient, $\nablaT$---at least up to its role in the energy formula.  For related representations of the gradient for the harmonic gasket, see \cite{Tep1}. \\ 

The first important theorem regarding a geodesic, or segment, or shortest path between two points on $K$, in the context of $K$ in harmonic coordinates, is due to Teplyaev.  First, a boundary curve $\tau$ of the gasket in harmonic coordinates, is defined by Teplyaev as a parameterization of a boundary of a connected component of $M_{0}\backslash K_{H}$.   In Theorem 4.7 of \cite{Tep1}, Teplyaev states the following: 

\begin{enumerate}

\item $\tau$ is concave and is a $C^{1}$ curve but \textsl{is not a} $C^{2}$ curve; 

\item for any $x\in K$ such that $\Psi(x)\in \tau$, the projection $P_{x}$ is, in harmonic coordinates, the orthogonal projection onto the tangent line to $\tau$. 

\end{enumerate}

 Let 
  \[h_{*}(p,q) \vcentcolon= \inf\{\hspace{1mm} l(\gamma)\hspace{1mm} | \hspace{1mm} \gamma \mbox{  is a rectifiable curve in  } K_{H} \mbox{  between  } p \mbox{  and  } q\},\]
 
 \noindent where $l(\gamma)$ is the length of the curve $\gamma$.  Kigami makes use of the above results to prove Theorem 5.1 in \cite{Kig4} which states that for any $p,q\in K_{H}$, there exists a $C^{1}$ curve $\gamma_{*}: [0,1]\rightarrow K_{H}$ such that $\gamma_{*}(0)=p$, $\gamma_{*}(1)=q$, $Z(\Phi^{-1}(\gamma_{*}(t)))$ exists and $\frac{d\gamma_{*}}{dt}\in ImZ(\Phi^{-1}(\gamma_{*}(t)))$ for any $t\in [0,1]$, and 
 
\[h_{*}(\gamma_{*}(a),\gamma_{*}(b))=\int_{a}^{b}{\left(\frac{d\gamma_{*}}{dt},Z(\Phi^{-1}(\gamma_{*}(t)))\frac{d\gamma_{*}}{dt}\right)^{\frac{1}{2}}dt} \]

\noindent for any $a,b\in [0,1]$ with $a<b$.  Note that due to this result, the infimum in the definition of $h_{*}$ can be replaced by the minimum. Kigami calls $\gamma_{*}$ a \textsl{geodesic} between $p$ and $q$.  The proof of this theorem is lengthy, with the majority of the work going into proving Theorem 5.4 of \cite{Kig4}.  Kigami credits Teplyaev (Theorem 4.7 in \cite{Tep1}) for the latter result but gives his own proof.  He uses the distance function $h_{*}$ in order to define the \textsl{harmonic shortest path metric} on $K$,  $d_{*}(.,.)$, for $x,y\in K$, as
 
 \[d_{*}(x,y)=h_{*}(\Phi(x),\Phi(y)), \]
 
\noindent or with slight abuse,

\[d_{*}(x,y)=\int_{a}^{b}{\left\langle \dot{\gamma},Z\dot{\gamma}\right\rangle}^{\frac{1}{2}}dt, \]
 
\noindent where $\gamma$ is a geodesic (shortest path) between $x$ and $y$ in $K_{H}$ with $\gamma(a)=x$ and $\gamma(b)=y$.  This latter representation provides a strong analogy with the geodesic distance on a Riemannian manifold, where a smooth metric has been replaced by a measurable metric, $Z$.\\

Note that $d_*$ corresponds to the geodesic metric on the harmonic gasket $\KH$.  Further note that, clearly, a geodesic between any two points of $\KH$ (or, equivalently, a shortest harmonic path between any two points of $K$) is usually not unique.  (See Figure 2.)

\section{Spectral Geometry of Fractal Sets}

This section is motivated by the desire to specify a natural or intrinsic metric on certain sets built on curves, including certain fractal sets (and certain infinite graphs), via a Dirac operator and its associated spectral triple.  In this section, we look at a class of sets built on curves in $\R^{n}$, each of which is assumed to have a shortest path metric which we will call the geodesic distance.  The construction of the Dirac operator detailed in this section is a generalization of a construction for a finite collection of curves used in \cite{CrIvLap}.  We show that the spectral distance function induced by the Dirac operator recovers the geodesic distance for this class of sets.  The Sierpinski gasket and the harmonic gasket both fall in this class of sets, whereas only the former example lies within the scope of \cite{CrIvLap}.  The harmonic gasket as well as alternate constructions for the Dirac operator for the harmonic gasket are discussed in the next section.  First we recall some definitions related to length spaces and a relevant result, the Hopf--Rinow Theorem (see, e.g., \cite{Gromov} for the general case of length spaces and \cite{Peterson} for the original case of Riemannian manifolds):\\

\begin{defn}  Let $(M,d)$ be a metric space.  The \textbf{induced intrinsic metric} $d_{I}=d_{I}(x,y)$ is defined as the infimum of the $d$-induced lengths of {\normalfont{(}}continuous{\normalfont{)}} paths from $x$ to $y$.  When there is no path from $x$ to $y$, then $d_{I}(x,y)$ is defined to be infinite. If $d(x,y)=d_{I}(x,y)$ for all $x$ and $y$ in $M$, then $(M,d)$ is called a \textbf{length space} and the metric $d$ is said to be \textbf{intrinsic}. 
\end{defn}

\begin{defn} Let $(M,d)$ be a length space and $\gamma: I\rightarrow M$ be a continuous path parameterized by arclength, where $I$ is an interval of the reals.  If $d(\gamma(t_{1}), \gamma(t_{2}))=|t_{1}-t_{2}|$ for all $t_{1}$ and $t_{2}$ in $I$, then $\gamma$ is called a \textbf{minimizing geodesic} or \textbf{shortest path}.

\end{defn}

\begin{thm}{\normalfont{(}}\textbf{Hopf--Rinow}{\normalfont{)}}  \label{HR}
If a length space $(M, d)$ is complete and locally compact, then any two points in $M$ can be connected by a minimizing geodesic, and any bounded closed set in $M$ is compact.
\end{thm}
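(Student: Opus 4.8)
The plan is to establish the two conclusions separately, handling the Heine--Borel-type statement (closed bounded sets are compact) first, since the existence of a minimizing geodesic will then follow from it by a compactness argument. Fix a basepoint $x_{0}\in M$ and consider the set $R=\{r\geq 0 : \bar{B}(x_{0},r)\text{ is compact}\}$, where $\bar{B}(x_{0},r)$ is the closed $d$-ball. First I would note that $R$ contains an interval $[0,\epsilon)$: local compactness gives $x_{0}$ a compact neighborhood, which contains $\bar{B}(x_{0},\epsilon)$ for small $\epsilon$, and a closed subset of a compact set is compact. The goal is to show $\rho:=\sup R=+\infty$, for then every closed ball is compact, and any closed bounded set, being a closed subset of some $\bar{B}(x_{0},r)$, is compact.

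The heart of the argument is to show, assuming $\rho<\infty$, both that $\bar{B}(x_{0},\rho)$ is itself compact and that compactness persists slightly beyond $\rho$, contradicting the supremum. This is precisely where the intrinsic (length-space) hypothesis is essential. To see $\bar{B}(x_{0},\rho)$ is totally bounded, fix $\epsilon>0$ and a point $x$ with $d(x_{0},x)\leq\rho$; since $d=d_{I}$, there is a path from $x_{0}$ to $x$ of length at most $d(x_{0},x)+\epsilon$, and a point $x'$ a short arclength back along that path lies in the compact ball $\bar{B}(x_{0},\rho-\epsilon)$ while satisfying $d(x,x')\leq 2\epsilon$. A finite $\epsilon$-net of $\bar{B}(x_{0},\rho-\epsilon)$ thus produces a finite net of mesh comparable to $\epsilon$ for $\bar{B}(x_{0},\rho)$, giving total boundedness; completeness then yields compactness.

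Next I would extend compactness past $\rho$. Using local compactness at each point of the now-compact set $\bar{B}(x_{0},\rho)$, each such point has a compact closed ball of some positive radius; finitely many cover $\bar{B}(x_{0},\rho)$, and one extracts a uniform radius $\delta>0$ with $\bar{B}(y,\delta)$ compact for every $y\in\bar{B}(x_{0},\rho)$. The same inward-projection device shows every point of $\bar{B}(x_{0},\rho+\delta/2)$ lies within $\delta$ of $\bar{B}(x_{0},\rho)$, so $\bar{B}(x_{0},\rho+\delta/2)$ is contained in a finite union of compact balls and is therefore compact, contradicting $\rho=\sup R$. Hence $\rho=\infty$.

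For the existence of a minimizing geodesic between $p,q\in M$, set $L=d(p,q)$. By the length-space property choose paths $\gamma_{n}$ from $p$ to $q$ with $\ell(\gamma_{n})\to L$, reparameterized proportionally to arclength on $[0,1]$ so that $\gamma_{n}$ is $\ell(\gamma_{n})$-Lipschitz. For large $n$ these lie in $\bar{B}(p,L+1)$, compact by the first part, and the uniform Lipschitz bound makes the family equicontinuous. Arzel\`a--Ascoli yields a subsequence converging uniformly to a path $\gamma$ from $p$ to $q$; since length is lower semicontinuous under uniform convergence, $\ell(\gamma)\leq\liminf_{n}\ell(\gamma_{n})=L$, while always $\ell(\gamma)\geq d(p,q)=L$, so $\ell(\gamma)=L$, and reparameterizing by arclength gives a path with $d(\gamma(t_{1}),\gamma(t_{2}))=|t_{1}-t_{2}|$, a minimizing geodesic in the sense of Definition~5. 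The step I expect to be the main obstacle is the compactness of the borderline ball $\bar{B}(x_{0},\rho)$: it is exactly here that completeness and local compactness alone do not suffice, and the intrinsic metric must be exploited through the inward-projection trick to upgrade local compactness into total boundedness at the critical radius.
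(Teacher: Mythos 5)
The paper does not prove this statement: Theorem~\ref{HR} is quoted as known background, with the reader referred to \cite{Gromov} for length spaces and \cite{Peterson} for the Riemannian case, so there is no in-paper argument to compare against. Your proposal is the standard proof of the Hopf--Rinow--Cohn-Vossen theorem (as in Gromov or in Burago--Burago--Ivanov), and it is essentially complete and correct: the three key mechanisms --- (i) total boundedness of the borderline ball $\bar{B}(x_{0},\rho)$ via the inward-projection along almost-minimizing paths, which is indeed the only place the intrinsic-metric hypothesis enters; (ii) the uniform local-compactness radius $\delta$ on the compact set $\bar{B}(x_{0},\rho)$ pushing compactness past $\rho$; and (iii) Arzel\`a--Ascoli applied to constant-speed almost-minimizers together with lower semicontinuity of length --- are exactly the right ones, and you correctly identify (i) as the crux. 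Two small points to tighten in a full write-up: in the totally-bounded step you must separately handle points $x$ whose connecting path has length less than $2\epsilon$ (they are already near $x_{0}$), and in the extension step the covering of $\bar{B}(x_{0},\rho+\delta/2)$ should be by the balls $\bar{B}(y_{i},\delta)$ centered at a finite net of $\bar{B}(x_{0},\rho)$ rather than at arbitrary points, so that the union is genuinely finite; also, at the very end, the identity $d(\gamma(t_{1}),\gamma(t_{2}))=|t_{1}-t_{2}|$ requires the triangle-inequality argument through $p$ and $q$ (arclength parameterization alone only gives $\leq$). None of these affects the validity of the approach.
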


 Let $X\subset \R^{n}$ be a compact length space.  Then $X$ is necessarily complete.  Furthermore, by the Hopf--Rinow Theorem (\thmref{HR}), $X$ has minimizing geodesics.  Let $L(\gamma)$ denote the length of the continuous curve $\gamma$ parameterized by its arclength.  We consider the following axioms for $X$:

\begin{enumerate}

\item[]  \textbf{Axiom 1 }:   $X=\overline{\mathcal{R}}$, where $\mathcal{R} =\bigcup_{j\in \N}R_{j}$;  $R_{j}$ is a rectifiable $C^{1}$ curve for each $j\in \N$, with $L(R_{j})\rightarrow 0$ as $j\rightarrow \infty$.

\item[]  \textbf{Axiom 2 }:  There exists a dense set $\mathcal{B}\subset X$ such that for each $p\in \mathcal{B}$ and each $q\in X$, one of the minimizing geodesics from $p$ to $q$ can be given as a countable (or finite) concatenation of the $R_{j}$'s.

\end{enumerate}
 
\begin{rmk} \label{rmk2}
In Axiom 2, it is understood that the countable concatenation of $R_{j}$'s begins with $p\in \mathcal{B}$ as the initial endpoint of some $R_{j}$.  Therefore, $\mathcal{B}$ is a subset of the collection of endpoints of the $R_{j}$ curves, and hence, Axiom 2 implies that the endpoints are dense in $X$. 
\end{rmk}

  For $p,q\in X$ and $\gamma$ a minimizing geodesic between $p$ and $q$, we will define the \textsl{geodesic distance}, $d_{geo}$ , by $d_{geo}(p,q)=L(\gamma)$.  
 

\begin{prop} \label{mainprop}
Suppose $X$ is a compact length space which satisfies Axiom 1.  Then the countable sum of $R_{j}$-curve triples, $S(X)$, is a spectral triple for $X$.  Furthermore, if $D$ is the Dirac operator associated to $S(X)$ and $L(R_{j})=\alpha_j$ for each $j\in \N$, then the spectrum of $D$ is given by

\[\sigma(D)=\bigcup_{j\in \N}\left\{\left[\frac{(2k+1)\pi}{2\alpha_{j}}\right] : k\in \mathbb Z \right\}.\] 

Moreover, the spectral dimension of $X$ with respect to $S(X)$ {\normalfont{(}}or equivalently, the metric dimension of $S(X)${\normalfont{)}} is given by

\[\mathfrak{d}_{S(X)}=\inf \left\{p>1: \sum_{j\in \N}\alpha_{j}^{p} < \infty \right\}    .\]
\end{prop}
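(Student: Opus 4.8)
The plan is to verify the three assertions in turn, exploiting the block-diagonal structure of $S(X)=\bigl(C(X),\bigoplus_{j}H_{j},\bigoplus_{j}D_{j}\bigr)$ and the single standing hypothesis from Axiom 1 that $\alpha_{j}=L(R_{j})\to 0$. To show first that $S(X)$ is a spectral triple, I would check the requirements of Definitions 1 and 2. The representation $\pi=\bigoplus_{j}\pi_{R_{j}}$ is unital and bounded, since each $\pi_{R_{j}}(a)$ is multiplication by $a\circ r_{j}$ with norm at most $\|a\|_{\infty,X}$; it is faithful because $\mathcal{R}=\bigcup_{j}R_{j}$ is dense in $X$ (Axiom 1), so a nonzero $a\in C(X)$ cannot vanish on every $R_{j}$, forcing some $\pi_{R_{j}}(a)\neq 0$. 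The operator $D=\bigoplus_{j}D_{j}$ is self-adjoint as a direct sum of self-adjoint operators. For condition (i) I would restrict to functions $a$ that are Lipschitz with respect to $d_{geo}$: because each $R_{j}$ is parameterized by arclength, $[D_{j},\pi_{R_{j}}(a)]$ acts as multiplication by $-i\,\frac{d}{dt}(a\circ r_{j})$, whose essential supremum is bounded by $\lip_{d_{geo}}(a)$ (since $d_{geo}(r_{j}(t_{1}),r_{j}(t_{2}))\le|t_{1}-t_{2}|$). Hence $[D,\pi(a)]=\bigoplus_{j}[D_{j},\pi_{R_{j}}(a)]$ is bounded with $\|[D,\pi(a)]\|=\sup_{j}\|[D_{j},\pi_{R_{j}}(a)]\|\le\lip_{d_{geo}}(a)<\infty$, and the $d_{geo}$-Lipschitz functions are dense in $C(X)$ by Stone--Weierstrass.

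The crux of condition (ii)---compactness of $(I+D^{2})^{-1}$---is exactly where Axiom 1 enters decisively. From Remark~\ref{rmk1}, the eigenvalues of $D$ are $\lambda_{j,k}=\pi(2k+1)/(2\alpha_{j})$, so $|\lambda_{j,k}|\ge\pi/(2\alpha_{j})$ for every $k\in\mathbb{Z}$. Because $\alpha_{j}\to 0$, for any bound $B$ only finitely many indices $j$ can contribute an eigenvalue with $|\lambda_{j,k}|\le B$, each such $j$ contributes only finitely many $k$, and every eigenvalue has finite multiplicity (a common value $\mu\neq 0$ forces $\alpha_{j}=\pi(2k+1)/(2\mu)$, satisfied by only finitely many $j$). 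Thus the eigenvalues accumulate only at infinity, which yields compactness of the resolvent and simultaneously proves the spectral formula of Part 2: the set $\bigcup_{j\in\N}\{\pi(2k+1)/(2\alpha_{j}):k\in\mathbb{Z}\}$ is already closed and discrete, so it equals $\overline{\bigcup_{j}\sigma(D_{j})}=\sigma(D)$ with no closure actually needed.

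For the spectral dimension I would compute, using Definition 3, the trace $\mathrm{Tr}\bigl((I+D^{2})^{-p/2}\bigr)=\sum_{j}S_{j}(p)$ with $S_{j}(p)=\sum_{k\in\mathbb{Z}}\bigl(1+\pi^{2}(2k+1)^{2}/(4\alpha_{j}^{2})\bigr)^{-p/2}$, and establish a two-sided estimate $S_{j}(p)\asymp\alpha_{j}^{p}$ uniform in $j$. Rewriting
\[
S_{j}(p)=(2\alpha_{j})^{p}\sum_{k\in\mathbb{Z}}\bigl(4\alpha_{j}^{2}+\pi^{2}(2k+1)^{2}\bigr)^{-p/2},
\]
the inner sum converges precisely when $p>1$ and, using $\alpha_{j}\le A:=\sup_{j}\alpha_{j}<\infty$ together with $|2k+1|\ge 1$, lies between $j$-independent positive multiples $C_{p}'$ and $C_{p}$ of $\sum_{k}|2k+1|^{-p}$. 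Consequently $2^{p}C_{p}'\sum_{j}\alpha_{j}^{p}\le\mathrm{Tr}\bigl((I+D^{2})^{-p/2}\bigr)\le 2^{p}C_{p}\sum_{j}\alpha_{j}^{p}$, so the trace and $\sum_{j}\alpha_{j}^{p}$ converge for exactly the same $p>1$; for $p\le 1$ each $S_{j}(p)$ already diverges like $\sum_{k}|k|^{-p}$. Taking the infimum over $p$ with finite trace gives $\mathfrak{d}_{S(X)}=\inf\{p>1:\sum_{j}\alpha_{j}^{p}<\infty\}$, as claimed.

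The main obstacle is securing the uniform comparison in Part 3, especially the lower bound $S_{j}(p)\ge 2^{p}C_{p}'\alpha_{j}^{p}$, so that convergence of the trace is genuinely \emph{controlled by} (not merely dominated by) $\sum_{j}\alpha_{j}^{p}$; this is what forces the factor $\alpha_{j}^{p}$ to be the exact summand governing finite summability. A secondary delicate point is the compactness argument, where one must confirm that passing to a countable direct sum introduces no finite accumulation point of the spectrum---a conclusion that rests entirely on the condition $\alpha_{j}\to 0$ in Axiom 1 and would fail without it.
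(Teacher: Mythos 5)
Your proposal is correct and follows essentially the same route as the paper: form the block-diagonal triple, read off the eigenvalues $\pi(2k+1)/(2\alpha_j)$ from the translated interval triples, use $\alpha_j\to 0$ for compactness of the resolvent, and factor the double sum over $j$ and $k$ to get $\mathfrak{d}_{S(X)}=\inf\{p>1:\sum_j\alpha_j^p<\infty\}$. The only local differences are that the paper takes the real linear functionals on $\R^n$ (whose commutator bound $|a_1|+\cdots+|a_n|$ is uniform in $j$) as the dense set with bounded commutators, where you use $d_{geo}$-Lipschitz functions, and that you carry out explicitly the two-sided comparison of $(1+\lambda^2)^{-p/2}$ with $|\lambda|^{-p}$ that the paper leaves implicit; both choices are sound.
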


\begin{proof}
For each $j\in \N$, let $R_{j}$ be parameterized such that $L(R_j)=\alpha_j$.  Using the $r$-curve triple with $r=R_{j}$ and $\alpha=\alpha_{j}$ yields the unbounded Fredholm module 
 
 \[S_{j}=\left(C(X), H_{j}, D_{j}\right)\]

 \noindent for $R_{j}$, with representation $\pi_{j}$.  To construct a spectral triple for $X$, we define 

\[\bigoplus_{j\in \N}S_{j}=\left(C(X), \bigoplus_{j\in \mathbb N}H_{j}, \bigoplus_{\j\in \mathbb N}D_{j}\right) ,\]

\noindent with representation

\[\bigoplus_{j\in\N}\pi_{j}.\]

\noindent We refer to $S(X)$ as the countable sum of $R_{j}$-triples, with the notation

\[S(X)=\bigoplus_{\j\in \N}S_{j}, \hspace{5mm} D=\bigoplus_{\j\in \N}D_{j},  \hspace{5mm} H=\bigoplus_{j\in \N}H_{j}, \hspace{5mm} \mbox{and} \hspace{5mm} \pi_X=\bigoplus_{j\in\N}\pi_{j},\]

\noindent so that $S(X)=(C(X), H, D)$.\\

   By the Stone--Weierstrass Theorem, the real linear functionals on $\R^{n}$ are a dense subset of $C(X)$.  The real functionals will suffice as a dense subset having bounded commutators with the Dirac operator $D$.  Indeed,  if $f(x_{1},...,x_{n})=a_{1}x_{1}+...+a_{n}x_{n}$ is an arbitrary real functional and $R_{j}$ is parameterized (by arclength) in the variable $\tau$,  then (letting $||.||_{\infty}\vcentcolon = ||.||_{\infty, \R^{n}}$, $i\vcentcolon =\sqrt{-1}$ and using the discussion following Formula 1 in Subsection 2.1 in order to justify the first two equalities), we obtain

\[||[D_{j},\pi_{j}(f)]||=||D_{j}(f)||=||D_{j}(f)||_{\infty}=\left|\left|{\frac{1}{i}\frac{df}{d\tau}}\right|\right|_{\infty}\]

\[=||a_{1}(x'_{1}(\tau))+ ... +a_{n}(x'_{n}(\tau))||_{\infty}\leq |a_{1}|+...+|a_{n}|.\]

\noindent Since this bound is not dependent on $j$, we have 

\[||[D,\pi_X(f)]||=\sup_{j}\left\{||[D_{j},\pi_{j}(f)]||\right\}\leq  |a_{1}|+...+|a_{n}|.\] 

\noindent Therefore, as claimed above, the real linear functionals on $\R^{n}$ form a dense subspace of $C(X)$ comprised of elements having bounded commutators with $D$.\\

The eigenvalues of $D_{j}$ are determined by the length $\alpha_{j}$ of $R_j$ and are given in \rmkref{rmk1} of Subsection 2.2 above as $(\pi (2k+1)/2\alpha_j)$ for $k\in \mathbb Z$.  The eigenvalues of $D$ are the disjoint union of the eigenvalues for the $D_{j}$'s; so

\[\sigma(D)=\bigcup_{j\in \N}\left\{\left[\frac{(2k+1)\pi}{2\alpha_{j}}\right] : k\in \mathbb Z \right\}.\] 

Since $\alpha_{j}\rightarrow 0$ as $j\rightarrow \infty$, we deduce that $(D^{2}+I)^{-1}$ is a compact operator.  The self-adjointness of $D$ follows from the fact that its summands $D_{j}$ are self-adjoint for each $j$.   Thus $S(X)$ is an unbounded Fredholm module.  Furthermore, since a function in the image of $\pi_X$ is densely defined on $X$, the representation is faithful, so that $S(X)$ is a spectral triple.  (See Definitions 2.1 and 2.2 in Subsection 2.1.)  \\

Using the expression for the spectrum $\sigma(D)$ obtained above, we see that the spectral dimension (Definition 2.3 in Subsection 2.1) is given by

\[\mathfrak{d}_{S(X)}=\inf \left\{p>0: \sum_{j \in \N}\sum_{k\in \mathbb{Z}}{\left|\frac{(2k+1)\pi}{2\alpha_{j}}\right|^{-p}}<\infty\right\}.\]

Now, the double sum over $j$ and $k$ is finite if and only if the sum over $k$, $\sum_{k\in \N}(2k+1)^{-p}$, and the sum over $j$, $\sum_{j\in \N}\alpha_{j}^{p}$, are both finite.  (Indeed, up to a trivial multiplicative factor, the double sum can be written as the product of these two single sums.)  Since, clearly, $\sum_{k\in \N} (2k+1)^{-p}<\infty$ if and only if $p>1$, it follows that 

\[\mathfrak{d}_{S(X)}= \inf \left\{p>1 : \sum_{j\in \N} \alpha_{j}^{p}<\infty \right\},\]

\noindent as desired.
\end{proof}

\begin{rmk}  \label{rmk3}
It follows from the expression obtained for $\mathfrak{d}=\mathfrak{d}_{S(X)}$ in \propref{mainprop} that the spectral dimension of $X$ always satisfies the inequality $\mathfrak{d}\geq 1$.  
\end{rmk}

Since $X$ is a compact metric space in the geodesic metric, $d_{geo}$, we define its associated Lipschitz seminorm $\lip_{g}$ as in \eqref{lip1};  namely,

\[\lip_{g}(f)=\sup\left\{\frac{|f(x)-f(y)|}{d_{geo}(x,y)} : x\neq y\right\}.\]
  
\noindent The following lemma will be useful in recovering $d_{geo}$ from the Dirac operator via Formula 1:

\begin{lem} \label{lemma1}
Let $X$ be a compact length space satisfying Axioms 1 and 2, and let {\normalfont{$\lip_{g}$}} be the Lipschitz seminorm for the compact metric space $X$ with respect to $d_{geo}$.  Then, for any function $f$ in the domain of $D$, we have

\[||Df||_{\infty,X}={\normalfont{\lip_{g}}}(f).\]
\end{lem}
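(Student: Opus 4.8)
The plan is to prove the two inequalities $\|Df\|_{\infty,X}\le \lip_g(f)$ and $\|Df\|_{\infty,X}\ge \lip_g(f)$ separately, using Axiom 1 for the first and Axiom 2 for the second. Recall that on each curve $R_j$, parameterized by arclength $\tau$, the Dirac operator acts as $D_j f = \tfrac{1}{i}\tfrac{df}{d\tau}$ (as multiplication by this function), so $\|Df\|_{\infty,X}=\sup_j \|\tfrac{df}{d\tau}\|_{\infty,R_j}$, the essential supremum over all curves of the magnitude of the arclength derivative of $f$ along the curves.

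For the inequality $\|Df\|_{\infty,X}\le \lip_g(f)$, I would fix a curve $R_j$ and two nearby points $x=\gamma(\tau_1)$, $y=\gamma(\tau_2)$ on it. Since $R_j$ is rectifiable and parameterized by arclength, the arclength between $x$ and $y$ along $R_j$ is $|\tau_1-\tau_2|$, and because $d_{geo}$ is the \emph{intrinsic} (shortest-path) metric on $X$, we have $d_{geo}(x,y)\le |\tau_1-\tau_2|$. Hence
\[
\frac{|f(x)-f(y)|}{|\tau_1-\tau_2|}\le \frac{|f(x)-f(y)|}{d_{geo}(x,y)}\le \lip_g(f).
\]
Letting $\tau_2\to\tau_1$ and using that $f$ restricted to $R_j$ is differentiable (since $f\in\dom D$ acts through the circle/interval triple where the derivative exists a.e.), the left side tends to $|\tfrac{df}{d\tau}(\tau_1)|$. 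Taking the supremum over points and over $j$ yields $\|Df\|_{\infty,X}\le \lip_g(f)$.

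For the reverse inequality $\lip_g(f)\le \|Df\|_{\infty,X}$, I would invoke Axiom 2. Given arbitrary $x,y\in X$, I first take $p\in\mathcal{B}$ (the dense set) and use that one minimizing geodesic from $p$ to any target point decomposes as a concatenation of the $R_j$'s. For such a concatenated geodesic $\gamma$ from $p$ to $q$ with total length $d_{geo}(p,q)$, I estimate $|f(p)-f(q)|$ by summing over the constituent arcs: on each piece $f$ changes by at most $\|\tfrac{df}{d\tau}\|_{\infty,R_j}$ times the length of that piece, so the total change is at most $\|Df\|_{\infty,X}\cdot d_{geo}(p,q)$. This gives $|f(p)-f(q)|\le \|Df\|_{\infty,X}\, d_{geo}(p,q)$ for $p\in\mathcal B$ and all $q$. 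Finally, since $\mathcal B$ is dense in $X$ and both $f$ and $d_{geo}$ are continuous, I extend this Lipschitz bound to all pairs $x,y\in X$ by approximating $x$ with points of $\mathcal B$ and passing to the limit, obtaining $\lip_g(f)\le\|Df\|_{\infty,X}$.

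The main obstacle I anticipate is the bookkeeping in the concatenation argument of the second inequality: I must carefully justify that the length contributions of the (possibly countably many) arcs $R_j$ making up the geodesic sum to exactly $d_{geo}(p,q)$, that $f$ is genuinely absolutely continuous along each arc so that the fundamental theorem of calculus applies piecewise, and that the telescoping over infinitely many concatenated pieces converges correctly. The density-and-continuity limiting step from $\mathcal B$ to all of $X$ is routine once the bound on $\mathcal B$ is established, but the interplay between the countable concatenation, arclength parameterization, and the essential supremum $\|Df\|_{\infty,X}$ is where the delicate estimates lie.
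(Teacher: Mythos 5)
Your proof is correct, and its core coincides with the paper's: the inequality $\|Df\|_{\infty,X}\leq \lip_{g}(f)$ is obtained by comparing the arclength derivative of $f$ along each $R_{j}$ with difference quotients, using that $d_{geo}(x,y)$ is at most the arclength along $R_{j}$ between $x$ and $y$; and the reverse inequality is obtained, for $p\in\mathcal{B}$ and $q\in X$, by telescoping $f$ along the countable concatenation of $R_{j}$'s furnished by Axiom 2, bounding each increment by $d_{geo}(p_{j},p_{j+1})\,\|D_{j}f\|_{\infty,R_{j}}$ and using that the arcs of a minimizing concatenated geodesic are themselves minimizing, so their lengths sum to $d_{geo}(p,q)$. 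Where you genuinely diverge is the passage from the estimate on $\mathcal{B}\times X$ to all of $X\times X$. The paper argues that a minimizing geodesic between two arbitrary points $p,q$ meets a point $r_{0}\in\mathcal{B}$, splits it there into two concatenated geodesics, and then applies the endpoint estimate from the concatenation endpoints $r_{i}\to q$ back to $p$, passing to the limit in $i$. You instead observe that the bound $|f(p)-f(q)|\leq\|Df\|_{\infty,X}\,d_{geo}(p,q)$ is symmetric in its arguments, so one may approximate one of two arbitrary points by a sequence in the dense set $\mathcal{B}$ and pass to the limit using continuity of $f$ and of $d_{geo}$. Your route is shorter and, as a bonus, sidesteps the paper's (not entirely obvious) assertion that a geodesic between two arbitrary points must actually intersect $\mathcal{B}$; metric density of $\mathcal{B}$ suffices for your argument. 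The technical points you flag at the end---that $f$ is absolutely continuous along each arc, that the telescoping converges, and that the arc lengths sum to $d_{geo}(p,p_{n})$---are resolved exactly as you anticipate, since each $R_{j}$ in the concatenation is a minimizing geodesic between its endpoints, so $d_{geo}(p_{j},p_{j+1})=L(R_{j})$ and the single-curve estimate from the interval triple applies on each piece.
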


\vspace{1mm}

\begin{proof}
For any $f$ in the domain of $D$, we have (with $i\vcentcolon=\sqrt{-1}$)

\[||Df||_{\infty,X}= \sup_{j}\left\{||D_{j}f||_{\infty,R_{j}}\right\}=\sup_{j}\left\{\left|\left|\frac{1}{i}\frac{df}{dx}\right|\right|_{\infty,R_{j}}\right\}\]

\[=\sup_{j}\left\{\sup_{p,q\in R_{j}}\left\{\frac{|f(p)-f(q)|}{d_{geo}(p,q)}\right\}\right\}\leq \lip_{g}(f).\]

\noindent The first equality follows since $\mathcal{R}$ is dense in $X$, according to Axiom 1.  The last inequality is clear since $\lip_{g}$ is the supremum over all $p\neq q\in X$, not just those $p\neq q$ restricted to being in the same $R_{j}$. \\

The inequality in the other direction will come from Axiom 2.  First suppose $p\in \mathcal{B}$ and $q\in X$.  Then there is a geodesic from $p$ to $q$ which is a concatenation of $R_{j}$ curves.  Let $\{(p_{j},p_{j+1})\}$ be the sequence of pairs of endpoints tracking the $R_{j}$ curves such that $p_{1}=p$ and $\lim_{n\rightarrow \infty} p_{n}=q$ along $\gamma$.  We have the following estimate:

\[|f(p)-f(p_{n})|=|\sum_{j=1}^{n}f(p_{j})-f(p_{j+1})|\leq \sum_{j=1}^{n}|f(p_{j})-f(p_{j+1})|\]

\[\leq \sum_{j=1}^{n}\left(d_{geo}(p_{j},p_{j+1})||D_{j}f||_{\infty,R_{j}}\right)\leq \left(||Df||_{\infty, X}\right)\sum_{j=1}^{n}d_{geo}(p_{j},p_{j+1})\]

\[=||Df||_{\infty, X}d_{geo}(p,p_{n}).\]
 
\noindent  By the continuity of $f(x)$ and letting $a(x)\vcentcolon= d_{geo}(p,x)$, we deduce that

\[\frac{|f(p)-f(q)|}{d_{geo}(p,q)}\leq ||Df||_{\infty, X}.\]
 
\noindent  Note that the above estimate does not rely on the fact that $p\in \mathcal{B}$, but only on the fact that $p$ is an endpoint; see \rmkref{rmk2} above.\\

   Now suppose $p$ and $q$ are arbitrary distinct points in $X$.  By Axiom 2, there is a minimizing geodesic $\gamma$ in $X$ connecting $p$ and $q$.  Since $\mathcal{B}$ is dense in $X$,  $\gamma$ intersects some point of $\mathcal{B}$, say $r_{0}$.  Let $l_{1}$ be the length of $\gamma$ from $r_{0}$ to $p$ and $l_{2}$ be the length of $\gamma$ from $r_{0}$ to $q$.  Thus the total length of $\gamma$ is $l_{1}+l_{2}$. \\

    By Axiom 2, there exist minimizing geodesics $\gamma_{1}$ and $\gamma_{2}$ from $r_{0}$ to $p$, and from $r_{0}$ to $q$, respectively, which are countable concatenations of curves in $\mathcal{R}$ originating out of $r_{0}$.  It follows that $\gamma_{1}$ has length $l_{1}$ and $\gamma_{2}$ has length $l_{2}$.  For completeness, we briefly explain why this is the case.  Indeed, supposing the length of $\gamma_{1}$ is less than $l_{1}$ implies that the concatenation of $\gamma_{1}$ with $\gamma$ from $r_{0}$ to $q$ would have length less than $l_{1}+l_{2}$, contradicting the fact that $\gamma$ is a shortest path.  Moreover, if we suppose that $\gamma_{1}$ has length greater than $l_{1}$, then it follows that $\gamma$ is a shorter path from $r_{0}$ to $p$ , contradicting the fact that $\gamma_{1}$ is a shortest path.  The same arguments hold for $\gamma_{2}$.  Hence, the concatenation of $\gamma_{1}$ and $\gamma_{2}$ has length $l_{1}+l_{2}$ and is therefore a geodesic between $p$ and $q$.  \\
    
     Let $\gamma_{2}$ be tracked by endpoints $\{r_{i}\}$ of the concatenated curves $R_{i}$ in $\mathcal{R}$ such that $r_{1}=r_{0}$ and $\lim_{i\rightarrow \infty} (r_{i})=q$.  Define $\gamma_{1i}$ to be the path obtained by concatenating the first $i$ paths of $\gamma_{2}$ with $\gamma_{1}$ at $r_{0}$.  Using the estimate for an endpoint to a point in $X$, applied to $\gamma_{1i}$ from $r_{i}$ to $p$, we have 

\[\frac{|f(p)-f(r_{i})|}{d_{geo}(p,r_{i})}\leq ||Df||_{\infty, X} \mbox{  for all  } i\in \mathbb N.\]

\noindent  Again using the continuity of the functions $f(x)$ and $a(x)\vcentcolon= d_{geo}(p,x)$, we have 

\[\frac{|f(p)-f(q)|}{d_{geo}(p,q)}\leq ||Df||_{\infty, X}.\]

\noindent Since $p$ and $q$ are arbitrary points in $X$, it follows that $\lip_{g}(f)\leq ||Df||_{\infty, X}$, as desired.  
\end{proof}

We can now state and prove our main result for this section:

\begin{thm} \label{mainthm}
Let $X$ be a compact length space satisfying Axioms 1 and 2, and let $d_{X}$ be the distance function on $X$ induced by the spectral triple via Formula 1.  Then  $d_{X}=d_{geo}$, with the spectrum $\sigma(D)$ of the Dirac operator and the spectral dimension $\mathfrak{d}=\mathfrak{d}_{S(X)}$ as given in \propref{mainprop}.
\end{thm}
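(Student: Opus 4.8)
The plan is to reduce the theorem almost entirely to \lemref{lemma1}, since that lemma has already done the essential work; what remains is the standard ``if'' half of the correspondence between Connes' spectral distance and a Lipschitz-induced metric, specialized to our setting. First I would dispose of the two auxiliary assertions: the spectrum $\sigma(D)$ and the spectral dimension $\mathfrak{d}=\mathfrak{d}_{S(X)}$ are exactly as computed in \propref{mainprop}, and that same proposition guarantees that $S(X)$ is a genuine spectral triple, so that Formula 1 is well posed. The only genuinely new content is therefore the metric identity $d_X = d_{geo}$.

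Next I would put the spectral distance into workable form. Because $\pi_X$ acts by multiplication and $D$ differentiates (up to the factor $1/i$) along the arclength-parameterized curves $R_j$, the discussion following Formula 1 in Subsection 2.1 yields $\|[D,\pi_X a]\| = \|Da\|_{\infty,X}$, so that
\[
d_X(p,q) = \sup\{\,|a(p)-a(q)| : \|Da\|_{\infty,X}\le 1\,\}.
\]
By \lemref{lemma1}, every $a$ in the domain of $D$ satisfies $\|Da\|_{\infty,X} = \lip_g(a)$, so the admissibility constraint $\|Da\|_{\infty,X}\le 1$ is precisely $\lip_g(a)\le 1$.

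The identity then splits into two inequalities. For $d_X\le d_{geo}$, I would take any admissible $a$; then $\lip_g(a)\le 1$ by \lemref{lemma1}, whence $|a(p)-a(q)|\le \lip_g(a)\,d_{geo}(p,q)\le d_{geo}(p,q)$, and taking the supremum over admissible $a$ gives the bound. For the reverse inequality $d_X\ge d_{geo}$, I would invoke the metric recovery formula recorded in Subsection 2.1, namely $d_{geo}(p,q)=\sup\{\,|f(p)-f(q)| : \lip_g(f)\le 1\,\}$ (applied to the seminorm $\lip_g$ built from $d_{geo}$); each competitor $f$ there satisfies $\|Df\|_{\infty,X}=\lip_g(f)\le 1$ by \lemref{lemma1}, hence is admissible for $d_X$, so the $d_X$-supremum dominates the $d_{geo}$-supremum. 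Combining the two directions yields $d_X=d_{geo}$.

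The one point I would treat with care—the main obstacle internal to this theorem, now that the hard work lies in \lemref{lemma1}—is the matching of the function classes appearing in the two suprema, since \lemref{lemma1} is stated for $f$ in the domain of $D$ while Formula 1 ranges over all of $C(X)$ and the recovery formula over all $d_{geo}$-Lipschitz functions. I would verify that any $f$ with $\lip_g(f)<\infty$ restricts on each arclength-parameterized curve $R_j$ to a function that is Lipschitz with constant at most $\lip_g(f)$ (using that arclength dominates $d_{geo}$ along $R_j$), hence a.e.\ differentiable with derivative bounded by $\lip_g(f)$, placing $f$ in the domain of $D$ so that \lemref{lemma1} applies and $f$ is a legitimate competitor for $d_X$; conversely, any $a\in C(X)$ violating $\|Da\|_{\infty,X}\le 1$ is simply excluded from the $d_X$-supremum, so nothing is lost. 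Beyond this bookkeeping the argument is immediate, precisely because the genuinely difficult inequality $\lip_g(f)\le \|Df\|_{\infty,X}$—the step in which Axiom 2 and the concatenation of the $R_j$ enter—has already been established in \lemref{lemma1}.
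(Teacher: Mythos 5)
Your proposal is correct and follows essentially the same route as the paper: both reduce the theorem to \lemref{lemma1}, obtain $d_X\le d_{geo}$ from the identity $\|Da\|_{\infty,X}=\lip_g(a)$, and obtain the reverse inequality from the function $x\mapsto d_{geo}(p,x)$ (the paper exhibits this witness directly, which is exactly what underlies the Rieffel recovery formula you cite). Your extra paragraph checking that $d_{geo}$-Lipschitz functions lie in the domain of $D$ is a sensible piece of bookkeeping that the paper leaves implicit, but it does not change the argument.
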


\begin{proof}
First, we note that the spectrum of the Dirac operator and the spectral dimension are given as in \propref{mainprop} since $X$ satisfies Axiom 1.\\

To prove that $d_{X}=d_{geo}$, let  $p,q\in X$.  To compare $d_{geo}(p,q)$ with $d_{X}(p,q)$, note that for any $f$ (in the domain of $D$) such that $||Df||_{\infty, X}\leq 1$, we have by \lemref{lemma1} that $\lip_{g}(f)=||Df||_{\infty, X}$ and hence,

\[\frac{|f(p)-f(q)|}{d_{geo}(p,q)}\leq 1 .\]

\noindent In this case, $|f(p)-f(q)|\leq d_{geo}(p,q)$, and it holds that $d_{X}(p,q)\leq d_{geo}(p,q)$.  To get the inequality in the other direction, define the function $h(x)=d_{geo}(p,x)$.  Then, $\lip_{g}(h)=1$ and 

\[|h(p)-h(q)|=|0-d_{geo}(p,q)|=d_{geo}(p,q). \]

\noindent Therefore, since $h$ is a Lipschitz function on $X$, $h$ is witness to the inequality $d_{geo}(p,q)\leq d_{X}(p,q)$, and we have shown that $d_{X}(p,q)=d_{geo}(p,q)$, as desired. 
\end{proof}

\thmref{mainthm} is an extension of Connes' theorem on a compact Riemannian manifold to the class of compact length spaces determined by Axioms 1 and 2.  In the next two sections, we provide examples of fractal sets which fall in this class of length spaces.  The first example is the Sierpinski gasket, in which case its geometry has been recovered using similar methods in \cite{CrIvLap}.  The second example is the harmonic gasket and its measurable Riemannian geometry which provide a setting closer to that of Riemmanian manifolds, and for which our results are new.

\section{Spectral Geometry of the Sierpinski Gasket}

In this section, we show that the Sierpinski gasket, $K$, is a model for \thmref{mainthm}.  It is shown in \cite{CrIvLap} that $K$ is a compact length space.  It remains to prove that $K$ satisfies Axiom 1 and Axiom 2:

\begin{prop} \label{KA12}
The Sierpinski gasket $K$ satisfies Axioms 1 and 2.
\end{prop}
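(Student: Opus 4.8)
The plan is to verify the two axioms directly for the Sierpinski gasket $K$, using the self-similar structure encoded by the contractions $F_i$ and the dense set of vertices $V_*$. The natural candidate for the countable family $\{R_j\}$ is the collection of edges of all graph approximations $\Gamma_m$: each graph cell $\Gamma_w$ with $|w|=m$ is a triangle whose three sides are Euclidean segments joining vertices in $V_m$, and I would take the $R_j$ to be an enumeration of all such edges over all levels $m\geq 0$ and all words $w\in W_m$. Since $F_w$ scales by $(1/2)^m$, an edge at level $m$ has Euclidean length $c\cdot 2^{-m}$ for a fixed constant $c$; as we enumerate over all levels there are finitely many edges at each level, so ordering them by increasing level gives $L(R_j)\to 0$ as $j\to\infty$. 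Each $R_j$ is a straight segment, hence trivially a rectifiable $C^1$ curve. The remaining content of Axiom 1 is that $X=\overline{\mathcal R}$ where $\mathcal R=\bigcup_j R_j$; this follows because the union of all edges contains all vertices $V_*$, and $V_*$ is dense in $K$ in the Euclidean topology (a fact recalled in Subsection 2.3).

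For Axiom 2, I would take $\mathcal B$ to be the set of vertices $V_*$, which is dense in $K$. The task is to show that for each $p\in V_*$ and each $q\in K$, some minimizing geodesic from $p$ to $q$ is a countable concatenation of the segments $R_j$. The key geometric fact, established in \cite{CrIvLap}, is that $K$ is a compact length space; combined with the Hopf--Rinow theorem (\thmref{HR}) this guarantees existence of minimizing geodesics. The structural input I would exploit is that the shortest paths in the Euclidean gasket metric are piecewise-linear paths travelling along edges of graph cells: given $p\in V_*$ and $q\in K$, one constructs a geodesic by descending through the nested cell decomposition. Concretely, $p$ lies in $V_m$ for some finite $m$, and $q=\pi(w)$ is the intersection of a nested sequence of cells $K_{w_1\cdots w_k}$; one builds a path from $p$ into successively finer cells containing $q$, at each stage travelling along finitely many edges of the relevant graph cell to reach a vertex of the next-finer cell closer to $q$. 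This produces a concatenation of edges (hence of $R_j$'s) whose total length equals the geodesic distance $d_{geo}(p,q)$, with the path converging to $q$.

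The main obstacle, and the step requiring the most care, is Axiom 2 — specifically, verifying that the concatenation one builds is genuinely length-minimizing and that it can always be arranged to start at the prescribed vertex $p\in V_*$. One must argue that the greedy cell-descent construction does not overshoot: at each level the partial path must already lie on a globally shortest route to $q$, not merely a shortest route within the current cell. I expect to handle this by induction on cell level together with the observation (also used in the proof of \lemref{lemma1}) that a subpath of a geodesic is itself a geodesic, so that any detour would contradict minimality of the overall path. A subtlety is the non-uniqueness of geodesics and the possibility that $q$ is itself a vertex (so $q\in V_*$, giving a finite concatenation) versus $q\notin V_*$ (requiring a genuinely countable concatenation that converges to $q$ in the nested-cell sense); both cases are permitted by the statement of Axiom 2, so I would simply treat them in parallel. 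Once Axioms 1 and 2 are confirmed, \propref{KA12} is complete, and \thmref{mainthm} then applies to recover the Euclidean geodesic metric on $K$ from the spectral triple.
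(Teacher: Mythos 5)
Your proposal matches the paper's proof in all essentials: the $R_j$ are taken to be the edges of the graph cells at all levels (straight segments of length proportional to $2^{-m}$, so Axiom 1 follows from the density of $V_*$), and $\mathcal{B}=V_*$ with the geodesic built by nested-cell descent. The minimality concern you flag is resolved in the paper exactly along the lines you anticipate --- each edge is a straight segment and hence itself a minimizing geodesic between its endpoints, and connectedness forces any path to $q$ to pass through a vertex of the cell containing $q$ at the first level separating $p$ from $q$ --- together with a short extra step (a finite concatenation, reversed and prepended) for the case where $p\in V_*$ is a vertex only of a deeper approximation than that separating level.
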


\begin{proof}
Let $K$ be decomposed into its cell edges by decomposing each of its graph cells $\Gamma_{w}$ into $\Gamma_{w,j}$, for $j\in \{l,r,b\}$, where $l$, $r$, and $b$ denote the left, right, and bottom, respectively, of each graph cell (triangle) of the gasket. Then, the union over $|w|=n\in\mathbb{N}$ and $j\in\{l,r,b\}$ of the $\Gamma_{w,j}$'s is the countable union of cell edges whose closure is $K$.  Indeed, this union contains the set of vertices $V^{*}$, which is dense in $K$.  We can reorder the cell edges with $\mathbb{N}$, with each cell edge given by $R_{j}$, for some $j\in\mathbb{N}$, in non-increasing order.  Let
  
\[\mathcal{R}=\bigcup_{j\in \mathbb{N}}R_{j}.\]
 
\noindent Then $K=\overline{\mathcal{R}}$.  The first graph cell has $R_{1}$, $R_{2}$, and $R_{3}$ as its edges, which are of equal finite length.  An $R_{j}$ curve which is an edge of a graph cell of $\Gamma_{m}$ has length proportional to $(1/2^{m})$.  There are $3^{m}$ curves of this length.  It follows that the sequence of (Euclidean) lengths $\alpha_j=L(R_j)$ satisfies $L(R_{j})\rightarrow 0$ as $j\rightarrow \infty$ and that each $R_{j}$ is a rectifiable $C^{1}$ curve (a straight line segment in $\R^{2}$ with bounded length).  Therefore, $K$ satisfies Axiom 1.\\

We now show that $K$ satisfies Axiom 2.  The key properties which allow $K$ to satisfy Axiom 2 are its connectedness and the fact that every edge curve is itself a minimizing geodesic between its endpoints.  Let $p\in V^{*}$ and $q\in K$.  A shortest path to $q$ from $p$ is constructed by considering the lowest graph approximation $\Gamma_{m}$ which puts $p$ and $q$ in separate cells, $K_{w}$ and $K_{w'}$, respectively, with $|w|=|w'|=m$. First suppose $p$ is a vertex in the graph cell $\Gamma_{w}$, with $|w|=m$. \\

 By connectedness, the shortest (in fact, any) path from $p$ to $q$ must pass through a vertex $v$ of $\Gamma_{w'}$.   There is an $R_{j}$ which is an edge of a graph cell in $\Gamma_{m}$ connecting $p$ to $v$.  Each $R_{j}$ is a straight line segment and is therefore itself a minimizing geodesic between its endpoints.  Hence, the curve $R_{j}$ connecting $p$ to $v$ suffices as the \textsl{first leg} of the shortest path from $p$ to $q$.  We repeat the previous argument, finding the lowest graph approximation placing $v$ and $q$ in different cells.  Since $v$ is necessarily a vertex of this (higher) graph approximation, we apply the same argument to the cells separating $v$ and $q$.  Continuing in this manner, we obtain a path that is a countable concatenation of $R_{j}$'s which are edges of cells whose diameters go to zero.  The finite intersection property yields a unique limit point, which is necessarily $q$.  \\

 For the case when $p\in V^{*}$ but is a vertex of a higher approximation than $\Gamma_{m}$, we can use the special case above. Let $u$ be a vertex of $\Gamma_{w}$ and $p\in K_{w}$.  The argument above for a shortest path from $p$ to $q$ applies to the shortest path from $u$ to $p$.  However, in this case, since $p\in V^{*}$, the process terminates after finitely many iterations.  Indeed, there is a `last' graph cell the path must travel to until it is at most one edge curve away from $p$.  This finite concatenation can be reversed from $p$ to $u$ and then concatenated with the path from $u$ to $q$.  The resulting path is a minimizing geodesic from $p$ to $q$ which is a countable concatenation of $R_{j}$ curves.  Since $V^{*}$ is dense in $K$,  Axiom 2 is satisfied.
\end{proof}

In the light of \propref{KA12}, we have the following immediate corollary to \thmref{mainthm}.

\begin{cor}  \label{corK}
The spectral triple, $S(K)$, constructed from the countable sum of $R_{j}$-curve triples, satisfies the following{\normalfont{:}}  {\normalfont{(}}1{\normalfont{)}} The distance function induced by $S(K)$ via Formula 1 coincides with the geodesic distance function on $K${\normalfont{;}} {\normalfont{(}}2{\normalfont{)}}  The spectral dimension of $S(K)$ is equal to $\log 3/\log2$.\footnotemark[1] \\
\end{cor}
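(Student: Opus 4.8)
The plan is to treat this as a direct consequence of \thmref{mainthm} and \propref{mainprop}, together with the structural facts already established in \propref{KA12}, so that essentially no new work is needed. For claim (1) I would observe that every hypothesis is in place: it is shown in \cite{CrIvLap} that $K$ is a compact length space, and \propref{KA12} establishes that $K$ satisfies Axioms 1 and 2. Consequently $K$ meets all the assumptions of \thmref{mainthm}, and I would simply invoke that theorem to conclude that the distance function $d_K$ induced by the spectral triple $S(K)$ via Formula 1 coincides with the geodesic distance $d_{geo}$ on $K$. This settles the first assertion immediately.

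For claim (2) I would apply the closed-form expression for the spectral dimension obtained in \propref{mainprop}, namely
\[\mathfrak{d}_{S(K)}=\inf\left\{p>1:\sum_{j\in\N}\alpha_j^p<\infty\right\},\]
and evaluate the series using the edge data recorded in the proof of \propref{KA12}. The key input is that the cell edges organize by scale: a side of a graph cell of $\Gamma_m$ has length proportional to $2^{-m}$, and the number of such edges is proportional to $3^m$ (there are $3^m$ cells at level $m$, each contributing three sides). Grouping the lengths $\alpha_j=L(R_j)$ according to the level $m$ at which they occur, the sum collapses to a geometric series, so that up to a fixed positive multiplicative constant it behaves like
\[\sum_{m=0}^{\infty}3^{m}\,(2^{-m})^{p}=\sum_{m=0}^{\infty}\left(\frac{3}{2^{p}}\right)^{m}.\]

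The convergence analysis is then routine: this geometric series converges if and only if $3/2^{p}<1$, that is, $2^{p}>3$, i.e.\ $p>\log 3/\log 2$. Hence the set of exponents for which $\sum_j\alpha_j^p<\infty$ is exactly $(\log 3/\log 2,\infty)$, whose infimum is $\log 3/\log 2$. Since $\log 3/\log 2>1$, the constraint $p>1$ appearing in the formula of \propref{mainprop} is inactive and does not change the value, so I would conclude $\mathfrak{d}_{S(K)}=\log 3/\log 2$, as claimed.

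I do not expect a genuine obstacle, since the statement is a corollary; the only point demanding care is the bookkeeping of the edge lengths by scale---correctly accounting for the number of edges at each level (up to the harmless constant factor three coming from the three sides per cell) and confirming that the threshold of the resulting geometric series is precisely the similarity dimension $\log 3/\log 2$. As a consistency check I would note that this value exceeds $1$, in agreement with \rmkref{rmk3}, and recovers the Hausdorff dimension of the gasket exactly as in \cite{CrIvLap}.
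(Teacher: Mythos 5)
Your proposal is correct and matches the paper's treatment: the paper states this as an immediate corollary of \thmref{mainthm} (for part (1)) and of the spectral-dimension formula in \propref{mainprop} combined with the edge-length data recorded in the proof of \propref{KA12} (for part (2)), leaving the geometric-series computation $\sum_m 3^m(2^{-m})^p$ implicit. Your bookkeeping of the $3^m$ edges of length proportional to $2^{-m}$ per level, and the observation that the threshold $\log 3/\log 2>1$ makes the constraint $p>1$ inactive, is exactly the intended calculation.
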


\footnotetext[1]{This value coincides with the Hausdorff dimension of $K$, both with respect to the Euclidean metric (as is well known, see, e.g., \cite{Falconer, Mattila}) and with respect to the geodesic metric of $K$ (according to the results of \cite{CrIvLap}).  It does not, however, coincide with the Hausdorff metric of $\KH$ with respect to the geodesic metric, which is also $>1$ but close to 1.3 (as was recently shown in \cite{Kaj1, Kaj2, Kaj3}).  (To our knowledge, the fractal dimension of $\KH$ with respect to the Euclidean metric is still unknown.)}

\section{Spectral and Measurable Riemannian Geometry}

  As mentioned in Subsection 2.4, it is shown in \cite{Kig4} that $K_{H}$ is a compact length space and that the minimizing geodesics have a representation in the language of measurable Riemannian geometry analogous to the corresponding representation of geodesics in Riemannian geometry.  
In this section, we show that the harmonic gasket, $K_{H}$, satisfies Axioms 1 and 2 and is thus a model for \thmref{mainthm}.  The result is that we are able to recover Kigami's geodesic distance from the spectral triple and Dirac operator for the harmonic gasket via Formula 1. Let $D$ be the 
Dirac operator on $\KH$ and let $\mathcal{A}=C(\KH)$.  \corref{maincor} at the end of this section yields the following result:

\begin{thm} \label{Kigami}  Let $p$ and $q$ be arbitrary points in $\KH$, and let $\gamma$ be a minimizing geodesic from $p$ to $q$ such that $\gamma(t_{1})=p$ and $\gamma(t_{2})=q$.  Then we have
\begin{equation}  \label{intgeo}
\int_{t_{1}}^{t_{2}}{\left\langle \dot{\gamma},Z\dot{\gamma}\right\rangle}^{\frac{1}{2}}dt=\sup_{a\in \mathcal A}\left\{|a(p)-a(q)| : ||[D,\pi a]||\leq 1 \right\}. 
\end{equation}
\end{thm}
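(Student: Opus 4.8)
The plan is to bring $\KH$ under the hypotheses of \thmref{mainthm} by verifying Axioms 1 and 2 for it, and then to reconcile the resulting geodesic distance $d_{geo}$ with the measurable-Riemannian integral on the left of \eqref{intgeo}. I would first establish a proposition parallel to \propref{KA12}, choosing the building curves to be the images $R_j=\Phi(\Gamma_{w,j})$ of the cell edges of $K$. For Axiom 1, note that distinct cells of $K$ meet only at vertices, so each edge $\Gamma_{w,j}$ abuts exactly one connected component of $M_0\setminus\KH$ (a hole, or the unbounded exterior component); hence each $R_j$ is an arc of a boundary curve of such a component and is therefore a rectifiable $C^1$ (but not $C^2$) curve by Teplyaev's theorem (Theorem 4.7 of \cite{Tep1}). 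Since the $J_i$ have eigenvalues $3/5$ and $1/5$, a level-$m$ cell has diameter $O((3/5)^m)$, so $L(R_j)\to 0$ after reindexing the edges in non-increasing order of length; and as $\Phi(V^*)\subset\bigcup_j R_j$ is dense in $\KH$, we obtain $\KH=\overline{\bigcup_j R_j}$.

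For Axiom 2, I would take $\mathcal B=\Phi(V^*)$ and transport the combinatorial connectedness argument of \propref{KA12} through $\Phi$: a shortest path from a vertex $p$ to an arbitrary $q$ is assembled leg by leg through the nested cells separating the two points, producing a countable concatenation of edge-arcs $R_j$ whose unique limit point---pinned down by the finite-intersection property as cell diameters vanish---is $q$. The one genuinely new ingredient, and what I expect to be the crux of the whole argument, is that each curved edge-arc $R_j$ is itself a \emph{minimizing} geodesic between its endpoints in the intrinsic metric of $\KH$; in the Euclidean case of \propref{KA12} this was automatic because edges are straight segments. I would deduce it from Teplyaev's theorem: part (2) states that along a boundary curve the metric $Z$ is the orthogonal projection onto its tangent line, so $\dot R_j\in\Img Z$ along $R_j$, which is exactly Kigami's tangency condition for a geodesic (Theorem 5.1 of \cite{Kig4}); and the concavity in part (1) rules out any competing shorter path inside $\KH$ between the endpoints of $R_j$. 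Minimality of the full concatenation then follows as in \propref{KA12}.

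Granting Axioms 1 and 2, \thmref{mainthm} immediately identifies the right-hand side of \eqref{intgeo}---the spectral distance---with $d_{geo}(p,q)=L(\gamma)$, the length of a minimizing geodesic in $\KH\subset M_0$. It then remains only to recognize $L(\gamma)$ as the left-hand integral, which is the easy step. By Kigami's Theorem 5.1 a minimizing geodesic satisfies $\dot\gamma\in\Img Z(\Phi^{-1}(\gamma(t)))$, and $Z$ is the orthogonal projection onto its rank-one image, so $Z\dot\gamma=\dot\gamma$ and $\langle\dot\gamma,Z\dot\gamma\rangle^{1/2}=\|\dot\gamma\|$; hence $\int_{t_1}^{t_2}\langle\dot\gamma,Z\dot\gamma\rangle^{1/2}\,dt=\int_{t_1}^{t_2}\|\dot\gamma\|\,dt=L(\gamma)$. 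Combined with \thmref{mainthm}, this is precisely \eqref{intgeo}. I expect essentially all of the difficulty to lie in the Axiom 2 verification---upgrading Teplyaev's local, analytic description of the boundary curves to the \emph{global} minimality of individual cell edges---while Axiom 1 and the final identification are direct consequences of the cited results.
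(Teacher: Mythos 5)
Your overall architecture coincides with the paper's: verify Axioms 1 and 2 for $\KH$ (the analogue of \propref{KA12}), invoke \thmref{mainthm} to identify the spectral distance with $d_{geo}$, and then rewrite $L(\gamma)$ as $\int_{t_1}^{t_2}\langle\dot\gamma,Z\dot\gamma\rangle^{1/2}dt$ using Kigami's Theorem 5.1 and the fact that $Z$ is a.e.\ the orthogonal projection onto its rank-one image. The Axiom 1 verification and the final identification are correct and match \propref{A12} and the surrounding discussion.

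The gap is exactly where you place the crux: the proof that each edge-arc $\Phi(R_j)$ is a minimizing geodesic in $\KH$ between its endpoints. Your first argument --- that $\dot R_j\in\Img Z$ along the arc, ``which is exactly Kigami's tangency condition for a geodesic'' --- does not work: Theorem 5.1 of \cite{Kig4} asserts that \emph{some} minimizing curve satisfies the tangency condition, not that every curve satisfying it is minimizing (any concatenation of edge-arcs joining the two endpoints, however long, satisfies $\dot\gamma\in\Img Z$ a.e.). Your second argument, that concavity ``rules out any competing shorter path inside $\KH$,'' is precisely the assertion to be proved, and you supply no mechanism for it. The paper does: by Lemma 5.5 of \cite{Kig4} the compact region $D_{pq}$ bounded by the chord $\overline{pq}$ and the arc $\Phi(R_j)$ is convex and the arc is rectifiable; by Theorem 5.2 of \cite{Kig4}, if $C\subset D$ are compact in $\R^2$ with $C$ convex and $\partial D$ a rectifiable Jordan curve, then $L(\partial C)\le L(\partial D)$; and --- the step your sketch omits entirely --- since $\Phi$ is a homeomorphism and hence carries complementary components of $K$ to complementary components of $\KH$, the set $(D_{pq}\setminus\Phi(R_j))\cap\KH$ is empty, so any competing rectifiable path $\widetilde{pq}$ in $\KH$ bounds, together with $\overline{pq}$, a compact region $D'_{pq}\supset D_{pq}$. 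Applying the length comparison to $D_{pq}\subset D'_{pq}$ and subtracting the common chord yields $L(\Phi(R_j))\le L(\widetilde{pq})$. Without this containment-plus-comparison argument the minimality of the individual edges, and with it Axiom 2 and the whole theorem, is not established.
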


\vspace{2mm}

 As measurable Riemmannian geometry extends notions of smooth Riemmanian geometry to a certain fractal set, equality \eqref{intgeo} extends Connes' theorem for a compact Riemannian manifold to this setting.  
We first show that $K_{H}$ satisfies Axioms 1 and 2:

\begin{prop}  \label{A12}
The harmonic gasket $K_{H}$ satisfies Axioms 1 and 2.
\end{prop}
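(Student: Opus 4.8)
The plan is to verify the two axioms for $K_H$ by transporting the corresponding structure on $K$ (already established in \propref{KA12}) through the homeomorphism $\Phi$, while being careful that $\Phi$ is only self-\emph{affine} rather than self-similar, so lengths and the shortest-path structure must be re-examined in the harmonic coordinates. Concretely, I would define the curves $R_j^H := \Phi(R_j)$, where the $R_j$ are the Euclidean cell-edge segments used in \propref{KA12}. Since $\Phi$ commutes with the contractions in the sense $\Phi\circ F_i = H_i\circ\Phi$, each $R_j^H$ is the image under the appropriate composition $H_w$ of one of the three edges of the top-level harmonic cell; I would invoke Teplyaev's Theorem~4.7 (recalled in Subsection~2.4) to assert that these boundary edge curves in harmonic coordinates are $C^1$, hence each $R_j^H$ is a rectifiable $C^1$ curve.

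For Axiom~1, I would show $K_H = \overline{\bigcup_{j} R_j^H}$ and that $L(R_j^H)\to 0$. The closure statement is immediate because $\Phi$ is a homeomorphism and $K=\overline{\mathcal R}$, so $K_H=\Phi(K)=\overline{\Phi(\mathcal R)}=\overline{\bigcup_j R_j^H}$; in particular the harmonic vertices $\Phi(V^*)$ are dense in $K_H$. The length decay requires a quantitative estimate: an edge $R_j^H$ sitting in an $m$-cell is the image of its top-level counterpart under a composition $J_w$ with $|w|=m$, and since each $J_i$ has operator norm $\tfrac{3}{5}<1$, the diameters (and hence $C^1$ arclengths, using the integral length formula of Kigami's Theorem~5.1) of the $m$-cell edges are bounded by a constant times $(3/5)^m\to 0$. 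I would record the lengths $\alpha_j = L(R_j^H)$ and note they accumulate at $0$, completing Axiom~1 and simultaneously feeding the spectral-dimension formula of \propref{mainprop}.

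For Axiom~2, I would take $\mathcal B_H := \Phi(V^*)$, which is dense, and fix $p=\Phi(p')\in\mathcal B_H$ and $q=\Phi(q')\in K_H$. Because $\Phi$ is a homeomorphism and geodesics in $K_H$ correspond under $\Phi$ to harmonic shortest paths of $K$, the combinatorial construction from \propref{KA12}---successively locating the lowest graph approximation separating the current vertex from the target and using single cell edges as legs---transports verbatim: the concatenation of Euclidean cell edges $R_{j_1},R_{j_2},\dots$ from $p'$ to $q'$ maps to a concatenation of $R^H_{j_1},R^H_{j_2},\dots$ from $p$ to $q$. The key point that each edge is itself a minimizing geodesic between its endpoints now holds because, by Kigami's Theorem~5.1, the harmonic geodesic distance is computed by the integral $\int\langle\dot\gamma,Z\dot\gamma\rangle^{1/2}\,dt$ along a $C^1$ curve, and a single boundary edge realizes the infimum between its own endpoints (nothing shorter can stay in $K_H$). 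Thus the transported concatenation has total length equal to $d_*(p,q)$ and is a minimizing geodesic, giving Axiom~2.

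I expect the main obstacle to be justifying the \emph{minimality} claim in harmonic coordinates, i.e.\ that the transported edge-concatenation is actually a shortest path for $d_*$ and not merely \emph{some} path. Unlike the Euclidean case, where straight segments are trivially geodesic, here one must lean on Teplyaev's concavity of the boundary curves (part~(1) of his Theorem~4.7) and on the fact that $\Phi$ preserves the cell/graph combinatorics, so that the \emph{same} separating-cell argument that forces any Euclidean path through a prescribed vertex also forces any harmonic path through the $\Phi$-image of that vertex. Once one knows that geodesics in $K_H$ must respect the cell structure and that edges are locally length-minimizing, the contradiction argument from \propref{KA12} (shortening $\gamma_1$ or $\gamma_2$ contradicts minimality of $\gamma$) applies unchanged. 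I would therefore frame the proof so that all the genuinely new content is isolated in the two observations ``$R_j^H$ is $C^1$ with $L(R_j^H)\to 0$'' and ``harmonic edges are minimizing,'' deferring everything else to the homeomorphism $\Phi$ and the already-proven \propref{KA12}.
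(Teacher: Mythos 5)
Your overall strategy --- transporting the edge decomposition of $K$ through $\Phi$, checking $C^1$ rectifiability and length decay via the affine maps with norm $3/5$, and then rerunning the separating-cell concatenation argument of \propref{KA12} --- is exactly the route the paper takes, and the Axiom 1 part is essentially complete (modulo a small slip: the arclength of $\Phi(R_j)$ for an edge in an $m$-cell is controlled because the linear maps $J_w$ contract lengths of rectifiable curves by a factor $\leq (3/5)^{m}$, not because arclength is bounded by a constant times diameter, which is false for general curves).

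The genuine gap is the step you yourself flag as ``the main obstacle'': that each harmonic edge $\Phi(R_j)$ is a minimizing geodesic in $K_H$ between its endpoints among \emph{all} rectifiable paths in $K_H$. You assert that ``a single boundary edge realizes the infimum between its own endpoints (nothing shorter can stay in $K_H$)'' and gesture at Teplyaev's concavity, but no argument is given, and Kigami's Theorem 5.1 (existence of a $C^1$ geodesic realizing $h_*$) does not by itself identify the edge as that geodesic. The paper closes this gap with a specific geometric argument: by Lemma 5.5 of \cite{Kig4}, the compact region $D_{pq}$ bounded by the edge $\Phi(R_j)$ together with the straight chord $\overline{pq}$ is convex; because $\Phi$ is a homeomorphism and hence preserves the complementary components (the ``holes''), the set $(D_{pq}\backslash \Phi(R_j))\cap K_H$ is empty, so any competing rectifiable curve $\widetilde{pq}$ in $K_H$ from $p$ to $q$ bounds, together with $\overline{pq}$, a region $D'_{pq}$ containing $D_{pq}$; and Theorem 5.2 of \cite{Kig4} (if $C\subset D$ are compact with $C$ convex and $\partial D$ a rectifiable Jordan curve, then $L(\partial C)\leq L(\partial D)$) then yields $L(\Phi(R_j))\leq L(\widetilde{pq})$ after subtracting the common chord. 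Without this (or an equivalent) argument, the minimality of the edges --- and hence Axiom 2 --- remains unproven; once it is supplied, the rest of your transported concatenation argument goes through as you describe.
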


\begin{proof}
Using the homeomorphism $\Phi$ between $K$ and $\KH$, and whose definition was recalled towards the end of Subsection 2.3, we can decompose $K_{H}$ from the decomposition we used for $K$.  The edges of graph cells in $K_{H}$ are exactly given as $\Phi(R_{j})$, where the $R_{j}$'s are edges of graph cells of $K$.  Let
  
\[\mathcal{R}=\bigcup_{j\in \mathbb{N}}\Phi(R_{j}).\]
 
\noindent Because $\Phi$ is a homeomorphism and since $K$ satisfies Axiom 1 (by \propref{KA12}), we have that $K_{H}=\overline{\mathcal{R}}$.   By Theorem 5.4 in \cite{Kig4} (Theorem 4.7 in \cite{Tep1} gives the same result),  $\Phi(R_{j})$ is a $C^{1}$ curve.  Moreover, by Lemma 5.5 in \cite{Kig4}, the curve $\Phi(R_{j})$ is rectifiable.  Since every cell edge is an affine image of an edge of the first graph cell, with maximum eigenvalue $3/5$, it follows that the sequence of (Euclidean) lengths of the curve $\Phi(R_j)$ satisfies $L(\Phi(R_{j}))\rightarrow 0$ as $j\rightarrow \infty$.  Therefore, $K_{H}$ satisfies Axiom 1.  \\

  The argument that $K_{H}$ satisfies Axiom 2 is analogous to the argument for $K$ (given in the second part of the proof of \propref{KA12}), except that convexity is a proxy for straight lines.  To be precise, we need to show that if $p$ and $q$ are the endpoints of an edge, $\Phi(R_{j})$, then $\Phi(R_{j})$ is the minimizing geodesic in $K_{H}$ between $p$ and $q$ (and thus the shortest path between any two points on $R_{j}$ lies on $\Phi(R_{j})$).  Let $p$ and $q$ be vertices of a cell $K_{H,w}$ of $K_{H}$ and $\overline{pq}$ be the straight line segment in $M_{0}$ connecting $p$ and $q$.  Let $\Phi(R_{j})$ be the cell edge connecting $p$ and $q$ and  $D_{pq}$ be the compact region bounded by $\overline{pq}\cup \Phi(R_{j})$.  Lemma 5.5 in \cite{Kig4} states that $D_{pq}$ is convex and that $\Phi(R_{j})$ is rectifiable. \\
  
   Theorem 5.2 in \cite{Kig4} states that if $C\subset D$ are compact subsets in $\R^{2}$ with $C$ convex and $\partial D$ a rectifiable Jordan curve, then $L(\partial C)\leq L(\partial D)$.  Lemma 5.6 in \cite{Kig4} uses this theorem to show that $\Phi(R_{j})$ is a shortest path between $p$ and $q$ among all rectifiable paths in $K_{H,w}$ between $p$ and $q$.  Since we would like to show this holds among all rectifiable paths in $K_{H}$, we follow the proof of Lemma 5.6 in \cite{Kig4}, except that we allow for $\widetilde{pq}$ to be any rectifiable (w.l.o.g., non-intersecting) curve in $K_{H}$ connecting $p$ and $q$. \\
   
   Let $D'_{pq}$ be the compact region bounded by $\widetilde{pq}\cup \overline{pq}$.  Since $\Phi$ is a homeomorphism and thus preserves the holes, and hence the interior and exterior of $K$, we have that $(D_{pq}\backslash \Phi(R_{j}))\cap K_{H}$ is empty.  It therefore holds that $D_{pq}\subset D'_{pq}$ and by Theorem 5.2 in \cite{Kig4}, $L(\widehat{pq}\cup \overline{pq})\leq L(\widetilde{pq}\cup \overline{pq})$.  Subtracting off the segment, $\overline{pq}$, which the two boundaries have in common, yields  $L(\widehat{pq})\leq L(\widetilde{pq})$.  We have now shown that $\Phi(R_{j})$ is the minimizing geodesic in $K_{H}$ between $p$ and $q$.\\ 

   Next, let $p\in \Phi(V^{*})$ (i.e., $p$ is a vertex of $K_{H}$) and let $q\in K_{H}$.  Since $K_{H}$ is topologically equivalent to $K$, the argument for a geodesic from $p$ to $q$ is the same as for $K$ (in the proof of \propref{KA12}), except that the straight line edges, $R_{j}$, are replaced with the harmonic edges, $\Phi(R_{j})$.  Therefore, a geodesic from $p$ to $q$ can be given as a countable concatenation of $\Phi(R_{j})$'s.  Since $\Phi(V^{*})$ is dense in $K_{H}$ (because $V^*$ is dense in $K$ and $\Phi$ is a homeomorphism from $K$ onto $\KH$), it follows that $K_{H}$ satisfies Axiom 2.
\end{proof}

\propref{A12} shows that $K_{H}$ is a model for \thmref{mainthm}, and thus we have the following corollary, which is the exact counterpart for $\KH$ of \corref{corK} stated for $K$ at the end of Section 4:

\begin{cor}  \label{maincor}
The spectral triple, $S(K_{H})$, constructed from the countable sum of \hspace{.1mm} $\Phi(R_{j})$-curve triples satisfies the following{\normalfont{:}} {\normalfont{(}}1{\normalfont{)}}  The spectral distance induced by $S(K_{H})$ via Formula 1 coincides with Kigami's geodesic distance on $K_{H}${\normalfont{;}} {\normalfont{(}}2{\normalfont{)}} The spectrum $\sigma(D)$ of the Dirac operator and the spectral dimension $\mathfrak{d}=\mathfrak{d}_{S(K_{H})}$ are given as in \propref{mainprop} {\normalfont{(}}with $X=K_{H}${\normalfont{)}}.
\end{cor}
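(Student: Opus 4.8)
The plan is to recognize that \corref{maincor} is, by design, the specialization of \thmref{mainthm} to the case $X=\KH$, so that almost all of the work has already been carried out. First I would assemble the two prerequisites. On the one hand, $\KH$ is a compact length space: this is part of Kigami's results in \cite{Kig4}, as recalled in Subsection 2.4. On the other hand, \propref{A12} has just established that $\KH$ satisfies both Axiom 1 and Axiom 2. Together, these say that $\KH$ meets every hypothesis of \thmref{mainthm}, so I would simply invoke that theorem with $X=\KH$ and with $S(X)=S(\KH)$ the countable sum of $\Phi(R_j)$-curve triples.

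Applying \thmref{mainthm} verbatim then yields two things at once. It gives that the spectral distance $d_{\KH}$ induced by $S(\KH)$ via Formula 1 coincides with the geodesic distance $d_{geo}$ on $\KH$, and it gives that $\sigma(D)$ and $\mathfrak{d}=\mathfrak{d}_{S(\KH)}$ are the expressions of \propref{mainprop} (the latter because $\KH$ satisfies Axiom 1). This proves part (2) of the corollary outright and reduces part (1) to an identification of two a priori distinct distances. The one point deserving brief attention is therefore the identification of the abstract $d_{geo}$ with Kigami's geodesic distance. Here $d_{geo}(p,q)$ is by definition the Euclidean arclength $L(\gamma)$ of a minimizing geodesic $\gamma$ between $p$ and $q$ in $\KH\subset M_0$, whereas Kigami's distance is $h_*(p,q)=\inf\{\, l(\gamma) : \gamma \text{ rectifiable in } \KH\}$. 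But by the length-space structure of $\KH$ together with \thmref{HR}, this infimum is attained and equals the length of a minimizing geodesic, so $d_{geo}=h_*$; and, as recalled in Subsection 2.4, $h_*$ is precisely the geodesic metric on $\KH$. Chaining these equalities with Kigami's Theorem 5.1, which supplies the representation $h_*(\gamma(t_1),\gamma(t_2))=\int_{t_1}^{t_2}\langle \dot\gamma, Z\dot\gamma\rangle^{1/2}\,dt$ along a geodesic, gives part (1) and in fact delivers \thmref{Kigami}, namely that the spectral distance equals $\int_{t_1}^{t_2}\langle \dot\gamma, Z\dot\gamma\rangle^{1/2}\,dt$.

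I expect no substantive obstacle, since the analytic heavy lifting has already been absorbed into the results invoked: the Lipschitz-seminorm identity of \lemref{lemma1}, the spectral and summability computations of \propref{mainprop}, and the convexity and rectifiability facts (Lemmas~5.5--5.6 and Theorem~5.2 of \cite{Kig4}) needed to verify Axiom 2 for $\KH$. The only genuine care required is bookkeeping: confirming that the ``geodesic distance $d_{geo}$'' of the abstract framework of Section 3 is taken with respect to the Euclidean metric on $M_0\subset\R^3$ inherited by $\KH$, so that it matches Kigami's $h_*$ (equivalently his $d_*$) rather than some unrelated intrinsic metric. Once that is noted, the corollary follows immediately, mirroring the proof of \corref{corK} for the Euclidean gasket $K$.
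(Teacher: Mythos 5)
Your proposal is correct and follows essentially the same route as the paper: the corollary is obtained by noting that $\KH$ is a compact length space (from \cite{Kig4}) satisfying Axioms 1 and 2 by \propref{A12}, and then applying \thmref{mainthm} with $X=\KH$. Your extra care in identifying the abstract $d_{geo}$ of Section 3 with Kigami's $h_*$ (and hence with the integral representation of \thmref{Kigami}) is a welcome bit of bookkeeping that the paper leaves implicit.
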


\section{Alternate Constructions for $K_{H}$}

In this section, we first construct the Dirac operator for $K_{H}$ in analogy with the construction for $K$ in \cite{CrIvLap}.  More precisely, we construct a spectral triple for $\KH$ using triples for the graph cells (distorted triangles) of the harmonic gasket, and therefore the construction comes directly from circle triples.  This construction has the benefit of keeping track of the `holes' in the gasket.  We show that it also recovers Kigami's geometry, yet the spectrum of the Dirac operator, though asymptotically the same, is not exactly the same as in the edge construction in the previous section.  We conclude this section with a construction which is the direct sum of the edge construction and the cell construction.  It is shown that this construction also recovers Kigami's measurable Riemannian geometry.

\subsection{Harmonic cell triple}

Recall from Subsection 2.3 that $\Gamma_{w}$ denotes a graph cell of $K$ associated with the finite word $w$.  Using the homeomorphism $\Phi$ from $K$ onto $\KH$, we can define the corresponding graph cell $T_{w}=\Phi(\Gamma_{w})$;  clearly, $T_{w}$ is a graph cell of $K_{H}$.  We can construct a triple on $ T_{w}$ by carrying the spectral triple on a circle directly to $ T_{w}$, as is done in \cite{CrIvLap} for an arbitrary graph cell of the Sierpinski gasket.  Let $r$ be the radius of a circle.  Since it is the complex continuous functions on the circle that are of interest, we make the natural identification with the complex continuous $2\pi r$-periodic functions on the real line.  Let the $\R^{2}$ induced arclength of $ T_{w}$ be denoted by $\alpha_{w}$.  (Here and in the sequel, we use the notation analogous to the one introduced towards the end of Subsection 2.2.) \\

 Considering a circle of radius $\alpha_{w}$, the appropriate algebra of functions consists of the complex continuous $2\pi \alpha_{w}$-periodic functions on the real line.  Let $r_{w}: [-\pi \alpha_{w},\pi \alpha_{w}]\rightarrow  T_{w}$ be an arclength parameterization of $ T_{w}$, counterclockwise, with $r_{w}(0)$ equal to the vertex joining the bottom and right sides of $ T_{w}$.  According to Definition 8.1 in \cite{CrIvLap}, the mapping $r_{w}$ induces a surjective homomorphism $\Psi_{w}$ of $C(\KH)$ onto $C([-\pi \alpha_{w},\pi \alpha_{w}])$ given by 

\[\Psi_{w}(f)(\tau)\vcentcolon=f(r_{w}(\tau)), \]

\noindent for $f\in C(\KH)$ and $\tau\in [-\pi \alpha_{w},\pi \alpha_{w}]$.  Let

\[\mathfrak{H}_{w}=L^{2}([-\pi \alpha_{w},\pi \alpha_{w}], (1/2\pi \alpha_{w})m),\] 

\noindent where $m$ is the Lebesgue measure on $[-\pi \alpha_{w},\pi \alpha_{w}]$, and let $\Pi_{w}:C(\KH)\rightarrow B(\mathfrak{H}_{w})$ be the representation of $f$ in $C(\KH)$ defined as the multiplication operator which multiplies by $\Psi_{w}(f)$.  We will again use the translated Dirac operator and define $\mathfrak{D}_{w}=D_{\alpha w}^{t}$.  (See Subsection 2.2 above.) \\

The triple $\mathfrak{S}(T_{w})=(C(\KH),\mathfrak{H}_{w}, \mathfrak{D}_{w})$ is an unbounded Fredholm module with representation $\Pi_{w}$.   The results in the following proposition follow from the corresponding results regarding the spectral triple on a circle obtained in Section 2 of \cite{CrIvLap}.  

\begin{prop} \label{hcellprop}
The triple $\mathfrak{S}( T_{w})=(C(\KH),\mathfrak{H}_{w}, \mathfrak{D}_{w})$ associated to $ T_{w}$ is an unbounded Fredholm module satisfying the following properties{\normalfont{:}}

\begin{enumerate}

\item The spectrum of the Dirac operator, $\mathfrak{D}_{w}$, is given by \[\sigma(\mathfrak{D}_{w})=\left\{\left[\left(\frac{(2k+1)\pi}{2\alpha_{w}}\right)\right] : k\in \mathbb Z \right\}. \] 

\item The metric $d_{w}$ induced by $\mathfrak{S}( T_w)$ on $ T_{w}$ coincides with the $\R^{2}$ induced arclength metric $l_{ T w}$ on $ T_{w}$.

\item The spectral dimension of $T_{w}$ is $1$.
\end{enumerate}
\end{prop}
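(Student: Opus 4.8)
The plan is to recognize that $\mathfrak{S}(\Tw)$ is simply the closed-curve (circle) triple associated with the boundary curve $\Tw = \Phi(\Gamma_w)$, parameterized by arclength via $r_w$, so that all three assertions are direct transcriptions of the circle-triple results recalled in Subsection 2.2 (Theorem 2.4 of \cite{CrIvLap}). First I would make this reduction explicit, and then dispatch the three items in turn, leaving the metric statement for last since it carries the only real content.

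For part (1), the eigenvalues are read off directly from the definition $\mathfrak{D}_w = D^{t}_{\alpha_w}$: by \rmkref{rmk1}, the exponentials $e_k = \exp(i\pi k x/\alpha_w)$ form a complete orthonormal system of eigenvectors with eigenvalues $(2k+1)\pi/2\alpha_w$, $k\in\mathbb{Z}$, and since these exhaust the spectrum the stated formula for $\sigma(\mathfrak{D}_w)$ follows. Part (3) is then the one-summand version of the computation in \propref{mainprop}: the triple is $p$-summable iff $\sum_{k\in\mathbb{Z}}|(2k+1)\pi/2\alpha_w|^{-p}$ converges, which happens exactly for $p>1$, so the spectral dimension of $\Tw$ is $1$, matching the dimension of a curve. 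Both of these steps are routine.

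The substance is in part (2). Here I would invoke Formula 1 together with the discussion following it in Subsection 2.1: because $\Pi_w$ acts by multiplication and $[\mathfrak{D}_w,\Pi_w(f)]$ is multiplication by $\mathfrak{D}_w f$, the spectral distance $d_w$ coincides with the intrinsic metric as soon as $\|\mathfrak{D}_w f\|_{\infty,\Tw} = \lip_{l_{Tw}}(f)$ for every $f$ in the domain. Pulling this back through the arclength parameterization $r_w$ turns it into $\|\tfrac{1}{i}(f\circ r_w)'\|_{\infty} = \lip(f\circ r_w)$ on the circle, which is precisely the content of Theorem 2.4 of \cite{CrIvLap}. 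To apply that result I must verify its hypotheses for $\Tw$: the curve is a closed concatenation of the three harmonic edges $\Phi(R_j)$, each a rectifiable $C^{1}$ curve by \propref{A12} (via Theorem 5.4 and Lemma 5.5 of \cite{Kig4}), so $r_w$ is a genuine arclength parameterization of a rectifiable Jordan curve and the fundamental-theorem-of-calculus estimate underlying the circle result survives the passage across the three corner vertices. I would then conclude that $d_w = l_{Tw}$.

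The hard part will be this transfer in part (2): one must confirm that the arclength parameterization of the distorted triangle $\Tw$ really does reduce the problem to the already-established circle case \emph{despite} the non-smooth corner points, and, equally, that the metric recovered is the \emph{intrinsic} (shorter boundary-arc) arclength metric $l_{Tw}$ on the curve rather than, say, the chordal distance inherited from $M_0$. The $C^{1}$-rectifiability of the edges furnished by \propref{A12} is exactly what licenses this reduction, and I would organize the argument so that this is the only nontrivial input, with parts (1) and (3) following as immediate corollaries of the eigenvalue list.
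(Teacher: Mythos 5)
Your proposal is correct and follows the same route as the paper, which disposes of this proposition in a single sentence by citing the circle-triple results of Section 2 of \cite{CrIvLap} (exactly the reduction via the arclength parameterization $r_w$ that you make explicit). Your additional care about the corner points and about which metric is recovered goes beyond what the paper records, but it is supporting detail for the same argument rather than a different approach.
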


\begin{rmk}  \label{rmk4}
\propref{hcellprop} does not state that the metric $d_{w}$ coincides with the restriction to the graph cell $\Tw$ of Kigami's geodesic metric on $\KH$, because in general this is not the case.  Indeed, points on different sides of $T_{w}$ will be connected by a geodesic that does not lie completely on $T_{w}$, and thus $d_{w}\geq d_{geo}$.  However, from the proof of \propref{A12}, it follows that $d_{w}$ restricted to an edge of $T_{w}$ coincides with Kigami's geodesic distance.
\end{rmk}

\subsection{Construction from cell triples}

We now construct a spectral triple on $\KH$ using the countable sum of triples  $\mathfrak{S}( T_{w})=(C(\KH),\mathfrak{H}_{w}, \mathfrak{D}_{w}).$  This is a natural construction of the spectral triple with respect to its \textsl{holes and connectedness}; this is also the construction used for the Sierpinski gasket in \cite{CrIvLap}. \\

To be precise, this construction yields a triple for each closed path, or \textsl{cycle}, in the space.  Following the line of reasoning on page 23 of \cite{CrIvLap}, each of these triples associated to a cycle induces an element in the $K$-homology of each graph approximation of $\KH$.  Each of these members of the $K$-homology group measures the winding number of a nonzero continuous function around the cycle to which it is associated, keeping track of the connectedness type of the graph approximation.\\

To formally construct the countable sum of $\mathfrak{S}( T_{w})$ triples, we will use the following notation:

\begin{enumerate}

\item $\mathfrak{H}_{\KH}=\bigoplus_{|w|=n}^{n\in \mathbb N}\mathfrak H_{w}$;

\item $\Pi_{\KH}=\bigoplus_{|w|=n}^{n\in \mathbb N}\Pi_{w}$;

\item $\mathfrak{D}_{\KH}=\bigoplus_{|w|=n}^{n\in \mathbb N}\mathfrak{D}_{w}.$

\end{enumerate}

\vspace{2mm}

\noindent In each case, the countable orthogonal direct sum is extended over 

\[W^{*}= \bigcup_{m\geq 0} W_m,\]

\noindent the set of all finite words, where $W_m$ is the set of all words $w$ of length $|w|=m\in \N$  on the alphabet $S={1,2,3}$;  see Subsection 2.3 above.  

Furthermore, the countable sum of the $\mathfrak{S}( T_w)$ triples is defined as $\mathfrak{S}(\KH)=(C(\KH),\mathfrak{H}_{\KH},\mathfrak{D}_{\KH})$.  In order to show that  $\mathfrak{S}(\KH)$ is a spectral triple, we first note that a function in the image of $\Pi_{\KH}$ is densely defined on $\KH$, so that we indeed have a faithful representation.  \\

Next, we show that there is a dense set of functions $f$ in $C(\KH)$ such that the commutator of $\Pi_{\KH}(f)$ with the Dirac operator $\mathfrak{D}_{\KH}$ is bounded.  The real-valued linear functions on $\R^{2}$ restricted to $\KH$, are dense in $C(\KH)$.  Furthermore, any real-valued linear function, $f(x,y)=ax+by$, restricted to the graph cell $\mathcal{T}_{w}$, has a bounded commutator with $\mathfrak{D}_{w}$ with bound $|a|+|b|$, independent of $w$. Thus $||[\mathfrak{D}_{\KH},\Pi_{\KH}(f)]||\leq |a|+|b|$ and hence the real-valued linear functions on $\R^{2}$, restricted to $\KH$, form a dense subset of $C(\KH)$   consisting of elements having bounded commutators with $\mathfrak{D}_{\KH}$.  \\

To see that the operator $(\mathfrak{D}_{\KH}^{2}+I)^{-1}$ is compact, we look at the eigenvalues of $\mathfrak{D}_{\KH}$, which are given by the disjoint union of the eigenvalues of all of the $\mathfrak{D}_{w}$'s:

\[\sigma(\mathfrak{D}_{\KH})=\bigcup_{n\in \N}\bigcup_{|w|=n}\left\{\left[\frac{(2k+1)\pi}{2\alpha_{w}}\right] : k\in \mathbb Z \right\},\] 

\noindent where we have used part 1 of \propref{hcellprop}.  As mentioned before, the $\alpha_{w}$'s are the lengths of the boundaries of the $w$-cells.  As a result, the eigenvalues of $(\mathfrak{D}_{\KH}^{2}+I)^{-1}$ go to zero and therefore, $(\mathfrak{D}_{\KH}^{2}+I)^{-1}$ is compact.  In addition, one verifies that $\mathfrak{D}_{\KH}$ is symmetric when acting on its eigenvectors, so that it is 
self-adjoint.\\

To compare the spectral distance function induced by $\mathfrak{S}(\KH)$ with $d_{geo}$, we have the following analog of \lemref{lemma1} in Section 3, which characterizes $||\mathfrak{D}_{\KH}||_{\infty, \KH}$ in terms of $d_{geo}$.

\begin{lem} \label{lemma2}
For any function $f$ in the domain of $\mathfrak{D}_{\KH}$, we have

\[||\mathfrak{D}_{\KH}f||_{\infty,\KH}={\normalfont{\lip_{g}}}(f).\]
\end{lem}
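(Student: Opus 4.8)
The plan is to reduce \lemref{lemma2} to \lemref{lemma1}, which already holds for $\KH$ since $\KH$ satisfies Axioms 1 and 2 by \propref{A12}. The key observation is that the cell Dirac operator and the edge Dirac operator $D$ of Section 5 produce the same sup-norm. Recall from the discussion following Formula 1 that $\mathfrak{D}_w$ acts on $f$ as multiplication by $\tfrac{1}{i}\,d(f\circ r_w)/d\tau$, where $\tau$ is the arclength parameter running once around the \emph{entire} boundary of the cell $\Tw$. Since $f\circ r_w$ is absolutely continuous on each of the three sides of $\Tw$ and has (at worst) corners at the three vertices, its essential derivative at a given point is simply the tangential derivative of $f$ along whichever edge contains that point. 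Hence $\|\mathfrak{D}_w f\|_{\infty,\Tw}$ equals the maximum, over the three edges $E$ of $\Tw$, of the essential supremum of the tangential derivative of $f$ along $E$.

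Next I would record the metric fact that drives the argument: by \rmkref{rmk4} (extracted from the proof of \propref{A12}), every edge of $\Tw$ is itself a minimizing geodesic of $\KH$, so along a single edge the boundary arclength coincides with Kigami's geodesic distance $d_{geo}$. Thus, for $f$ restricted to an edge $E$, the essential supremum of its tangential derivative equals its Lipschitz constant for $d_{geo}$ on $E$, namely $\sup\{|f(p)-f(q)|/d_{geo}(p,q): p\neq q\in E\}$. Taking the maximum over the three edges of $\Tw$ and then the supremum over all cells $w\in W_*$, and using that the family of cell edges $\{E : E \text{ an edge of } \Tw,\ w\in W_*\}$ is exactly the family $\{\Phi(R_j)\}_{j\in\N}$ of edge-curves of Section 5 (each segment-edge belongs to a unique cell, lengths at distinct levels being distinct), I obtain the identity $\|\mathfrak{D}_{\KH}f\|_{\infty,\KH}=\|Df\|_{\infty,\KH}$. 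Applying \lemref{lemma1} with $X=\KH$ to the right-hand side then gives $\|\mathfrak{D}_{\KH}f\|_{\infty,\KH}=\lip_g(f)$, as desired.

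Should a self-contained argument be preferred over the reduction, the two inequalities reproduce the proof of \lemref{lemma1} verbatim. For ``$\le$'', the per-edge bound above yields $\|\mathfrak{D}_w f\|_{\infty,\Tw}\le\lip_g(f)$ for every $w$, hence $\|\mathfrak{D}_{\KH}f\|_{\infty,\KH}\le\lip_g(f)$. For ``$\ge$'', I would invoke Axiom 2 to write a minimizing geodesic from an endpoint $p\in\Phi(V^*)$ to an arbitrary $q$ as a concatenation of edges with successive endpoints $p_1=p,p_2,\dots$; the mean-value estimate on each edge gives $|f(p_j)-f(p_{j+1})|\le d_{geo}(p_j,p_{j+1})\,\|\mathfrak{D}_{\KH}f\|_{\infty,\KH}$, and telescoping together with continuity of $f$ and of $x\mapsto d_{geo}(p,x)$ upgrades this to all pairs in $\KH$, exactly as in the second half of the proof of \lemref{lemma1}.

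The one genuinely delicate point, which I would address explicitly, is that the metric $d_w$ induced by a single cell triple $\mathfrak{S}(\Tw)$ is the \emph{full-boundary} arclength metric and is therefore strictly larger than $d_{geo}$ for points lying on distinct sides of $\Tw$ (see \rmkref{rmk4} and part 2 of \propref{hcellprop}). A priori it is thus not clear that $\|\mathfrak{D}_{\KH}f\|_{\infty,\KH}$ should recover the Lipschitz seminorm for $d_{geo}$ rather than for some larger metric. The resolution, and the crux of the proof, is that the commutator norm is governed by the \emph{local} tangential derivative of $f$, which at each point sees only the single edge through that point; and on each such edge the two metrics agree. This is precisely why \propref{hcellprop} measures $d_w$ against the boundary arclength while \lemref{lemma2} nonetheless recovers $d_{geo}$.
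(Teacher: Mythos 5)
Your proposal is correct, and its mathematical core is the same as the paper's: both arguments rest on the observation (\rmkref{rmk4}, extracted from the proof of \propref{A12}) that each edge of a cell $\Tw$ is itself a minimizing geodesic of $\KH$, so that along a single edge the boundary arclength agrees with $d_{geo}$, even though the cell metric $d_w$ exceeds $d_{geo}$ across different sides. Where you differ is in the packaging. The paper re-runs the proof of \lemref{lemma1}: it substitutes the per-edge estimate $|f(p_{j})-f(p_{j+1})|\leq d_{geo}(p_{j},p_{j+1})\,||\mathfrak{D}_{w}f||_{\infty,\Tw}$ for the corresponding estimate on $R_{j}$ and then repeats the concatenation and continuity argument; for the ``$\leq$'' direction it uses $d_w\geq d_{geo}$ together with the identification of $||\mathfrak{D}_{w}f||_{\infty,\Tw}$ with the Lipschitz constant of $f$ on $\Tw$ for $d_w$. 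Your primary route instead proves the identity $||\mathfrak{D}_{\KH}f||_{\infty,\KH}=||Df||_{\infty,\KH}$ directly --- both sides being the supremum, over the same countable family of cell edges $\{\Phi(R_j)\}$, of the essential supremum of the tangential derivative of $f$ --- and then quotes \lemref{lemma1} as a black box; your fallback two-inequality argument is the paper's proof essentially verbatim. The reduction is a genuine, if modest, streamlining: it isolates exactly the delicate point you flag (that $d_w>d_{geo}$ for points on different sides of $\Tw$ is irrelevant because the commutator norm only sees the local tangential derivative, which lives on a single edge) and avoids repeating the geodesic-decomposition argument. The one loose end, shared equally with the paper, is the tacit identification of the domains of $\mathfrak{D}_{\KH}$ and of the edge operator $D$ as spaces of functions on $\KH$; this is harmless here, since both conditions amount to uniform essential boundedness of the tangential derivative over the same family of edges.
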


\vspace{1mm}

\begin{proof} 
For any $f$ in the domain of $\mathfrak{D}_{\KH}$, we have (with $i\vcentcolon=\sqrt{-1}$)

\[||\mathfrak{D}_{\KH}f||_{\infty,\KH}=\sup_{w}\left\{||\mathfrak{D}_{w}f||_{\infty, T w}\right\}=\sup_{w}\left\{\left|\left|\frac{1}{i}\frac{df}{dx}\right|\right|_{\infty, T w}\right\}\]

\[=\sup_{w}\left\{\sup_{p,q\in  T w}\left\{\frac{|f(p)-f(q)|}{d_{w}(p,q)}\right\}\right\}\leq \sup_{w}\left\{\sup_{p,q\in  T w}\left\{\frac{|f(p)-f(q)|}{d_{geo}(p,q)}\right\}\right\} \]

\[\leq \lip_{g}(f),\]

\noindent since $d_{w}\geq d_{geo}$ (as was noted in \rmkref{rmk4}).  The last inequality holds since $\lip_{g}(f)$ is the supremum over all possible non-diagonal pairs of points on the harmonic gasket, which includes the non-diagonal pairs of points restricted to belonging to the same graph cell. \\

 To achieve the reverse inequality, we note that the critical inequality used to get this direction in \lemref{lemma1} was

\[|f(p_{j})-f(p_{j+1})|\leq d_{geo}(p_{j},p_{j+1})||D_{R_{j}}f||_{\infty,R_{j}},\]

\noindent where the $p_{j}$'s represent the decomposition of the geodesic constructed in \lemref{lemma1}, and $R_{j}$ is the edge curve connecting $p_{j}$ to $p_{j+1}$.  Recalling \rmkref{rmk4} following \propref{hcellprop}, we have that the spectral distance induced on 
$T_{w}$ by $\mathfrak{S}(T_w)$ coincides with $d_{geo}$ when restricted to the edges of $\Tw$.  This is of course a sufficient condition to replace $R_{j}$ with $T_{w}$ in the above inequality.   Indeed, for $p_{j}$ and $p_{j+1}$ belonging to the same edge, 

\[\frac{|f(p_{j})-f(p_{j+1})|}{d_{geo}(p_{j},p_{j+1})}=\frac{|f(p_{j})-f(p_{j+1})|}{d_{w}(p_{j},p_{j+1})}\leq ||\mathfrak{D}_{w}f||_{\infty, T_w}.\]

\noindent Therefore, 

\[|f(p_{j})-f(p_{j+1})|\leq d_{geo}(p_{j},p_{j+1})||\mathfrak{D}_{w}f||_{\infty, T_w}.\]

\noindent Now it follows from the argument used in the proof of \lemref{lemma1} that for an arbitrary point $q$ and a vertex $p$,

\[\frac{|f(p)-f(q)|}{d_{geo}(p,q)}\leq ||\mathfrak{D}_{\KH}f||_{\infty,\KH}.\]

\noindent The extension to the case when $p$ and $q$ are both arbitrary points in $\KH$ also follows the same argument as in the proof of \lemref{lemma1} and therefore,

\[\lip_{g}(f)\leq ||\mathfrak{D}_{\KH}f||_{\infty,\KH}.\]

\noindent We deduce that $||\mathfrak{D}_{\KH}f||_{\infty,\KH} = \lip_{g}(f)$, and hence, the proof of the lemma is completed. 
\end{proof}

Let $h_{spec}$ be the distance function induced by $\mathfrak{S}(\KH)$ and let $\mathfrak{b}_{\KH}$ be the spectral dimension of $\KH$ with respect to $\mathfrak{S}_{\KH}$.  Just as \lemref{lemma1} gives $d_{spec}=d_{geo}$, \lemref{lemma2} gives $h_{spec}=d_{geo}$ using the exact same argument as in the proof of \thmref{mainthm}.  The following theorem, an analog for the harmonic gasket of \propref{mainprop} and \thmref{mainthm}, summarizes the results for the spectral triple $\mathfrak{S}(\KH)=(C(\KH),\mathfrak{H}_{\KH},\mathfrak{D}_{\KH})$.

\begin{thm} \label{altmainthm}
The triple $\mathfrak{S}(\KH)=(C(\KH),\mathfrak{H}_{\KH},\mathfrak{D}_{\KH})$ associated to $\KH$ is a spectral triple satisfying the following properties{\normalfont{:}}

\begin{enumerate}

\item The spectrum of the Dirac operator, $\mathfrak{D}_{\KH}$, is given by

 \[\sigma(\mathfrak{D}_{\KH})=\bigcup_{n\in \N}\bigcup_{|w|=n} \left\{\left[\frac{(2k+1)\pi}{2\alpha_{w}}\right] : k\in \mathbb Z\right\}.\] 

\item The metric distance $h_{spec}$ induced by $\mathfrak{S}(\KH)$ coincides with Kigami's geodesic distance, $d_{geo}$.

\item The spectral dimension $\mathfrak{b}_{\KH}$ is the infimum of all $p>1$ such that 

\[\sum_{n\in \N}\sum_{|w|=n} (\alpha_{w})^p <\infty .\]

In particular, $\mathfrak{b}_{\KH}\geq 1$.

\end{enumerate}
\end{thm}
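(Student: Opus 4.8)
The plan is to assemble the three conclusions from ingredients already in place, since the bulk of the work has been distributed into \propref{hcellprop}, \lemref{lemma2}, and the verification carried out just above the statement that $\mathfrak{S}(\KH)$ is a genuine spectral triple. For part 1, I would note that $\mathfrak{D}_{\KH}=\bigoplus_{n\in\N}\bigoplus_{|w|=n}\mathfrak{D}_{w}$ is an orthogonal direct sum, so that its spectrum is the union $\bigcup_{n}\bigcup_{|w|=n}\sigma(\mathfrak{D}_{w})$; substituting the expression for $\sigma(\mathfrak{D}_{w})$ from part 1 of \propref{hcellprop} gives precisely the stated formula. Combined with the faithfulness of $\Pi_{\KH}$, the uniform bound $\|[\mathfrak{D}_{\KH},\Pi_{\KH}(f)]\|\leq|a|+|b|$ on the dense subalgebra of restrictions to $\KH$ of real linear functions $f(x,y)=ax+by$, and the compactness of $(\mathfrak{D}_{\KH}^{2}+I)^{-1}$ (which holds because $\alpha_{w}\to 0$ forces the eigenvalues to accumulate only at infinity), this confirms that $\mathfrak{S}(\KH)$ is a spectral triple.

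For part 2, the crucial input is \lemref{lemma2}, which asserts that $\|\mathfrak{D}_{\KH}f\|_{\infty,\KH}=\lip_{g}(f)$ for every $f$ in the domain of $\mathfrak{D}_{\KH}$. Granting this identity, I would repeat verbatim the two-sided estimate from the proof of \thmref{mainthm}: any admissible $f$, i.e. one with $\|\mathfrak{D}_{\KH}f\|_{\infty,\KH}\leq 1$, satisfies $\lip_{g}(f)\leq 1$ and hence $|f(p)-f(q)|\leq d_{geo}(p,q)$, giving $h_{spec}(p,q)\leq d_{geo}(p,q)$; conversely, the Lipschitz function $h(x)\vcentcolon= d_{geo}(p,x)$ has $\lip_{g}(h)=1$ and $|h(p)-h(q)|=d_{geo}(p,q)$, so it witnesses the reverse inequality $d_{geo}(p,q)\leq h_{spec}(p,q)$. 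This yields $h_{spec}=d_{geo}$, Kigami's geodesic distance.

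For part 3, I would begin from the spectrum found in part 1 and apply Definition 3, writing $\mathfrak{b}_{\KH}=\inf\{p>0:\sum|\lambda|^{-p}<\infty\}$, the sum ranging over the nonzero eigenvalues of $\mathfrak{D}_{\KH}$. Exactly as in the spectral-dimension computation in the proof of \propref{mainprop}, the resulting sum $\sum_{n}\sum_{|w|=n}\sum_{k\in\mathbb{Z}}|(2k+1)\pi/2\alpha_{w}|^{-p}$ factors, up to a harmless constant, as the product of $\sum_{k}(2k+1)^{-p}$ and $\sum_{n}\sum_{|w|=n}\alpha_{w}^{p}$. Since the first factor converges if and only if $p>1$, convergence of the whole is equivalent to $p>1$ together with finiteness of $\sum_{n}\sum_{|w|=n}\alpha_{w}^{p}$, which gives the stated formula for $\mathfrak{b}_{\KH}$ and, in particular, $\mathfrak{b}_{\KH}\geq 1$.

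The only step requiring genuine care is the factorization of the triple sum in part 3: one must check that the $k$-summation separates uniformly in $w$ and that regrouping the series over the set of finite words $W_{*}=\bigcup_{m}W_{m}$ does not affect convergence. Since every term is nonnegative, Tonelli's theorem permits free interchange and grouping of the sums, so the factorization is legitimate; I therefore expect this bookkeeping, rather than any conceptual difficulty, to be the main point to address, the remaining two parts being essentially immediate consequences of \propref{hcellprop}, \lemref{lemma2}, and the structural facts established before the statement.
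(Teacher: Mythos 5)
Your proposal is correct and follows essentially the same route as the paper: parts 1 and 2 are read off from Proposition~\ref{hcellprop}, Lemma~\ref{lemma2}, and the verification preceding the theorem (with the metric equality obtained by repeating the two-sided argument of Theorem~\ref{mainthm}), and part 3 is the same factorization of the triple sum into $\sum_{k}(2k+1)^{-p}$ times $\sum_{n}\sum_{|w|=n}\alpha_{w}^{p}$ used in Proposition~\ref{mainprop}. Your explicit appeal to Tonelli's theorem merely spells out what the paper treats as clear for a series of nonnegative terms.
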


\begin{proof}  That $\mathfrak{S}(\KH)$ is a spectral triple for $\KH$ and the first two claims (1 and 2) of \thmref{altmainthm} follow directly from the text just above \thmref{altmainthm}.  The third claim (3), much as in the proof of \propref{mainprop}, follows from the fact that by definition, and in light of the first part (1), 

\[\mathfrak{b}_{\KH}=\inf\left\{p>0: \sum_{n\in \N}\sum_{|w|=n}\sum_{k\in \mathbb Z} \left|\frac{(2k+1)\pi}{2 \alpha_{w}}\right|^{-p} < \infty\right\}.\]

\noindent It is clear that the triple sum in the expression above is finite if and only if the double sum over $n$ and $w$, $\sum_{n\in \N}\sum_{|w|=n} (\alpha_{w})^p$, and the sum over $k$, $\sum_{k\in \N} |2k +1|^{-p}$, are both finite.  Since, clearly, 
$\sum_{k\in \N} (2k+1)^{-p}<\infty$ if and only if $p>1$, it follows that 

\[\mathfrak{b}_{\KH}=\inf\left\{p>1: \sum_{n\in \N}\sum_{|w|=n} (\alpha_{w})^p <\infty\right\},\]

\noindent as desired.  
\end{proof}

The corollary to follow compares the geometries of the Sierpinski gasket induced by $S(\KH)$ and $\mathfrak{S}(\KH)$:

\begin{cor} \label{compcor}
For the spectral distance functions, $d_{spec}$ and $h_{spec}$, and the spectral dimensions, $\mathfrak{d}_{\KH}$ and $\mathfrak{b}_{\KH}$, the following equalities hold{\normalfont{:}}

\begin{enumerate}

\item $d_{spec}=h_{spec}$.

\item $\mathfrak{d}_{\KH}=\mathfrak{b}_{\KH}$.

\end{enumerate}
\end{cor}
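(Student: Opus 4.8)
The plan is to read off part (1) from metrics already identified and to reduce part (2) to an elementary comparison of two series, so that essentially no new analysis is required.

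For part (1), I would simply combine two facts established above. By \corref{maincor}(1), the distance $d_{spec}$ induced by the edge construction $S(\KH)$ equals Kigami's geodesic distance $d_{geo}$, while by \thmref{altmainthm}(2) the distance $h_{spec}$ induced by the cell construction $\mathfrak{S}(\KH)$ also equals $d_{geo}$. Hence $d_{spec}=d_{geo}=h_{spec}$, which is the first equality.

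For part (2), I would start from the two explicit spectral-dimension formulas. \propref{mainprop} (applied with $X=\KH$) gives $\mathfrak{d}_{\KH}=\inf\{p>1:\sum_{j}\alpha_{j}^{p}<\infty\}$, where the $\alpha_{j}=L(\Phi(R_{j}))$ run over the lengths of the harmonic cell edges, while \thmref{altmainthm}(3) gives $\mathfrak{b}_{\KH}=\inf\{p>1:\sum_{n\in\N}\sum_{|w|=n}\alpha_{w}^{p}<\infty\}$, where $\alpha_{w}$ is the boundary length of the cell $T_{w}$. The key structural observation is that both families are indexed by the same set $W_{*}$ of finite words: the edges $\Phi(R_{j})$ are exactly the curves $\Phi(\Gamma_{w,s})$ with $w\in W_{*}$ and $s\in\{l,r,b\}$, and the boundary of each cell $T_{w}$ is precisely the union of its three edges $\Phi(\Gamma_{w,l}),\Phi(\Gamma_{w,r}),\Phi(\Gamma_{w,b})$. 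Writing $\ell_{w,s}=L(\Phi(\Gamma_{w,s}))$, I would then regroup the positive-term edge series by words and recognize $\alpha_{w}$ as a fixed universal multiple of the perimeter $\ell_{w,l}+\ell_{w,r}+\ell_{w,b}$, so that, up to a harmless constant raised to the power $p$, the edge sum is $\sum_{w}\sum_{s}\ell_{w,s}^{p}$ and the cell sum is $\sum_{w}(\sum_{s}\ell_{w,s})^{p}$.

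Finally I would invoke the elementary two-sided bound, valid for $p\geq 1$ and nonnegative $a,b,c$,
\[a^{p}+b^{p}+c^{p}\ \leq\ (a+b+c)^{p}\ \leq\ 3^{\,p-1}\,(a^{p}+b^{p}+c^{p}),\]
the left inequality by superadditivity of $t\mapsto t^{p}$ and the right by convexity (Jensen). Summing termwise over $w\in W_{*}$ sandwiches the cell sum between the edge sum and a fixed multiple of it, so that for every fixed $p>1$ the two series converge or diverge together. The defining sets of exponents therefore coincide, and so do their infima, giving $\mathfrak{d}_{\KH}=\mathfrak{b}_{\KH}$. The only point that needs genuine care is the bookkeeping behind the bijection between the edges of the edge construction and the (cell, side) pairs of the cell construction---checking that both are indexed by $W_{*}$ and that each cell contributes exactly its three edges, with $\alpha_{w}$ differing from the true perimeter only by a fixed constant; there is no deeper obstacle, since once this identification is in place the power inequality does all the remaining work.
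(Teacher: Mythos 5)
Your proposal is correct and follows essentially the same route as the paper: part (1) is obtained by combining \corref{maincor} and \thmref{altmainthm}, and part (2) rests on the identity $\alpha_{w}=\sum_{s}\alpha_{w,s}$ relating cell perimeters to edge lengths (the paper states only this identity, leaving implicit the two-sided power inequality $a^{p}+b^{p}+c^{p}\leq(a+b+c)^{p}\leq 3^{p-1}(a^{p}+b^{p}+c^{p})$ that you spell out to conclude the two series converge for the same exponents). Your version merely makes explicit the bookkeeping and the convexity/superadditivity estimate that the paper takes for granted.
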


\begin{proof}  
The first fact follows immediately from \corref{maincor} and \thmref{altmainthm}.  The second fact is true since 
\[\alpha_{w}=\sum_{s\in \{L,R,B\}}{\alpha_{w,s}}.\]
\end{proof}

\subsection{The direct sum of $S(\KH)$ and $\mathfrak{S}(\KH)$}
 
In this section, we point out that the two spectral triples on $\KH$, $S(\KH)$ and $\mathfrak{S}(\KH)$, constructed above can be summed together, giving a spectral triple that also recovers Kigami's distance on $\KH$.  This construction involves the refinement of the curve triple construction and also keeps track of the holes in $\KH$.\\

\begin{thm}
Let $S(\oplus)=S(\KH) \oplus \mathfrak{S}(\KH)$, with $\pi_{\oplus}=\pi_{\KH} \oplus \Pi_{\KH}$.  Then $S(\oplus)$ is a spectral triple for $K_{H}$ and the  distance, $d_{\oplus}$, induced by $S(\oplus)$ on $K_{H}$ coincides with Kigami's geodesic distance on $\KH$.  
\end{thm}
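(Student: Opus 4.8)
The plan is to verify the two assertions separately, observing that both follow almost immediately from the work already done for the two constituent triples $S(\KH)$ and $\mathfrak{S}(\KH)$. The only genuinely new ingredient is the behavior of operator norms, compactness, and self-adjointness under the formation of a (finite) direct sum, together with the elementary fact that the commutator norm of a direct sum is the maximum of the two individual commutator norms. In effect, the theorem will be a formal consequence of \corref{maincor} (equivalently, of \thmref{mainthm} applied to $\KH$) and \thmref{altmainthm}, assembled through the standard behavior of spectral triples under direct sums.

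First I would establish that $S(\oplus)$ is a spectral triple. Writing $D_{\oplus}=D\oplus\mathfrak{D}_{\KH}$ for the Dirac operator of the summed triple, acting on $H\oplus\mathfrak{H}_{\KH}$, I would note that $D_{\oplus}$ is self-adjoint because it is the direct sum of the self-adjoint operators $D$ and $\mathfrak{D}_{\KH}$ (self-adjointness of each having been established in the proof of \propref{mainprop} and in the discussion preceding \thmref{altmainthm}). For the resolvent, I would use the identity
\[(I+D_{\oplus}^{2})^{-1}=(I+D^{2})^{-1}\oplus(I+\mathfrak{D}_{\KH}^{2})^{-1},\]
and recall that the direct sum of two compact operators is compact; since each summand is compact (again by \propref{mainprop} and \thmref{altmainthm}), so is $(I+D_{\oplus}^{2})^{-1}$. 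Faithfulness of $\pi_{\oplus}=\pi_{\KH}\oplus\Pi_{\KH}$ follows from the faithfulness of either summand. Finally, for the dense domain condition I would take the real-valued linear functionals $f(x,y)=ax+by$ restricted to $\KH$, which are dense in $C(\KH)$ by Stone--Weierstrass. Since
\[[D_{\oplus},\pi_{\oplus}(f)]=[D,\pi_{\KH}(f)]\oplus[\mathfrak{D}_{\KH},\Pi_{\KH}(f)]\]
and the operator norm of a direct sum is the supremum of the summand norms, the bound $|a|+|b|$ obtained independently for each construction gives $\|[D_{\oplus},\pi_{\oplus}(f)]\|\leq|a|+|b|$; in particular this commutator is bounded. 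This confirms that $S(\oplus)$ satisfies both conditions of Definition 1 and is a spectral triple in the sense of Definition 2.

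Next I would recover Kigami's geodesic distance. The decisive point is that for any $f$ in the domain of $D_{\oplus}$,
\[\|[D_{\oplus},\pi_{\oplus}(f)]\|=\max\{\|[D,\pi_{\KH}(f)]\|,\|[\mathfrak{D}_{\KH},\Pi_{\KH}(f)]\|\}=\max\{\|Df\|_{\infty,\KH},\|\mathfrak{D}_{\KH}f\|_{\infty,\KH}\},\]
where the second equality uses that each commutator acts as a multiplication operator whose norm is the essential supremum of the multiplying function (as explained following Formula 1). By \lemref{lemma1} the first of these sup-norms equals $\lip_{g}(f)$, and by \lemref{lemma2} the second also equals $\lip_{g}(f)$; hence $\|[D_{\oplus},\pi_{\oplus}(f)]\|=\lip_{g}(f)$. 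Thus the summed triple induces exactly the same Lipschitz seminorm as each of its summands, and the spectral distance $d_{\oplus}$ computed via Formula 1 coincides with the one computed from $S(\KH)$ or $\mathfrak{S}(\KH)$ alone. Running the argument of \thmref{mainthm} verbatim---one direction from $\|[D_{\oplus},\pi_{\oplus}(f)]\|\leq 1\Rightarrow|f(p)-f(q)|\leq d_{geo}(p,q)$, the other using $h(x)=d_{geo}(p,x)$ as a witness with $\lip_{g}(h)=1$---then yields $d_{\oplus}=d_{geo}$, Kigami's geodesic distance on $\KH$.

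I do not expect any serious obstacle: once \lemref{lemma1} and \lemref{lemma2} are in hand, the direct-sum construction cannot change the Lipschitz seminorm, since both constituents already realize it. The only points requiring a little care are the identity $\|A\oplus B\|=\max\{\|A\|,\|B\|\}$ applied to the commutator and the analogous compactness statement for the resolvent; these are standard facts about direct sums of Hilbert-space operators, but they are precisely what makes the summed triple inherit all the desired properties without any loss.
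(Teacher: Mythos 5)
Your proof is correct and follows essentially the same route as the paper's: both verify the spectral-triple axioms by inheriting self-adjointness, compactness of the resolvent, faithfulness, and the $|a|+|b|$ commutator bound from the two summands, and both recover $d_{geo}$ by combining \lemref{lemma1} and \lemref{lemma2} to get $\|[D_{\oplus},\pi_{\oplus}(f)]\|=\max\{\|Df\|_{\infty,\KH},\|\mathfrak{D}_{\KH}f\|_{\infty,\KH}\}=\lip_{g}(f)$ and then rerunning the argument of \thmref{mainthm}. The only cosmetic difference is that the paper checks compactness of $(D_{\oplus}^{2}+I)^{-1}$ by listing the eigenvalues as a disjoint union tending to zero, whereas you invoke the direct-sum decomposition of the resolvent directly; these are equivalent.
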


\begin{proof}
Let $D_{\KH}$ denote the Dirac operator associated to $S(\KH)$.  It is clear from \propref{mainprop} and \thmref{altmainthm} that for any real-valued linear function, $f(x,y)=ax+by$ on $K_{H}$, we have

\[||[D_{\KH},\pi_{\KH}(f)]||\leq |a|+|b| \mbox{  and  } ||[\mathfrak{D}_{\KH},\Pi_{\KH}(f)]||\leq |a|+|b|. \]

\noindent  Since $D_{\oplus}=D_{\KH}\oplus \mathfrak{D}_{\KH}$, it follows that

\[||[D_{\oplus},\pi_{\oplus}(f)]||\leq |a|+|b|.\]

\noindent (Recall that the underlying Hilbert space is the orthogonal direct sum of the Hilbert spaces associated with each spectral triple.)  Thus the real-valued linear functions on $\KH$ have bounded commutators with $D_{\oplus}$ and hence, the dense subalgebra condition is satisfied.\\

The operator, $(D_{\oplus}^{2}+I)^{-1}$ is compact, as the set of eigenvalues of $D_{\oplus}$ is the disjoint union of the eigenvalues of $D_{\KH}$ and $\mathfrak{D}_{\KH}$.  Indeed, the union is countable and can be arranged in a non-increasing order according to which the eigenvalues tend to zero.  The self-adjointness of $D_{\oplus}$ is also clearly inherited from its summands.  A function in the image of $\pi_{\oplus}$ is densely defined on $\KH$;  so the representation is faithful.\\

To prove the claim of recovery of Kigami's distance, we need to verify that 

\[||D_{\oplus}f||_{\infty,\KH}=\lip_{g}(f),\] 

\noindent for any $f$ in the domain of $D_{\oplus}$.  Indeed, by \lemref{lemma1} and \lemref{lemma2},

\[||D_{\oplus}f||_{\infty,\KH}=\max\{||D_{\KH}||_{\infty,\KH},||\mathfrak{D}_{\KH}||_{\infty,\KH}\}=\lip_{g}(f).\]

\noindent It then follows immediately that $d_{\oplus}=d_{geo}$.
\end{proof}

\section{Concluding Comments and Future Research Directions}

In this final section, we discuss several possible avenues for future investigation connected with the results of this paper.

\subsection{Spectral dimension and measure vs. Hausdorff dimension and measure}

We conjecture that the spectral dimension $\partial$ of the Dirac operator $D_{\KH}$ (for any of the spectral triples considered in this paper) is equal to the Hausdorff dimension $H$ of ($\KH,d_{geo}$), the harmonic gasket equipped with the harmonic geodesic metric:  $\partial= H$.\\

Moreover, we conjecture that (by analogy with the results obtained in \cite{CrIvLap} for the Euclidean Sierpinski gasket, as well as results and conjectures in \cite{Con1, Con2, ConSull, Lap3, Lap4, KL2}), the harmonic spectral measure, defined as the positive Borel measure naturally associated (via the Dixmier trace) with the given Dirac operator $D=D_{\KH}$, is proportional to the normalized Hausdorff measure $\mathcal{H}=\mathcal{H}_{H}$, defined as the normalized $H$-dimensional Hausdorff measure of the metric space $(\KH, d_{geo})$.  (Recall that by definition, $\mathcal{H}$ is the probability measure naturally associated with the standard $H$-dimensional Hausdorff measure of $(\KH, d_{geo})$.)\\

More specifically, if $Tr_{\omega}$ is any suitable Dixmier trace, then, for every $f\in C(\KH)$, we have (with $\partial=H$, the Hausdorff dimension of         
 $(\KH, d_{geo}))$:

\begin{equation} \label{DixTrace}
Tr_{\omega}\left(D^{-\partial}_{\KH} f\right)=c\int_{\KH} f d\mathcal{H},
\end{equation}

\noindent for some positive constant $c$ (equal to the spectral volume of $(\KH, d_{geo})$, see \cite{KL2, Lap3, Lap4}).  (It follows from the results in the present paper that the left-hand side of Equation \eqref{DixTrace} makes sense;  see, e.g., \cite{Con2, Lap3, Lap4}.) \\

We note that the exact counterpart of the result conjectured just above is obtained in \cite{CrIvLap} in the case of the standard Sierpinski gasket $K$, equipped with the (intrinsic) Euclidean geodesic metric.

\subsection{Global Dirac operator and Kusuoka Laplacian}

We expect that a suitable modification of the constructions provided in this paper should yield a global Dirac operator on $\KH$, and an associated spectral triple, with the same spectral dimension $\partial=H$ and corresponding spectral volume (proportional to the harmonic Hausdorff measure, as in Equation \eqref{DixTrace}), and whose square coincides with (or is in some sense spectrally equivalent to) the Kusuoka Laplacian (that is, minus the Laplacian with respect to the Kusuoka measure).  Some further discussion of this topic will be provided in Subsection 7.4 below.\\

\begin{rmk} \label{rmk6}
Recently, after this work was completed {\normalfont{(}}but independent of it{\normalfont{)}}, generalizing to the specific case of the harmonic gasket and the Kusuoka Laplacian Weyl's asymptotic formula for p.c.f. {\normalfont{(}}i.e., finitely ramified{\normalfont{)}} fractals obtained in {\normalfont{\cite{KL1}}} by Jun Kigami and the first author {\normalfont{(}}see also the later paper {\normalfont{\cite{KL2})}}, and also using results from {\normalfont{\cite{Kig4, Kig5}}}, Naotaka Kajino {\normalfont{(\cite{Kaj1}}} and, especially, {\normalfont{\cite{Kaj2, Kaj3})}} has determined the leading spectral asymptotics of the Kusuoka Laplacian on $\KH$.  In particular, he has shown in {\normalfont{\cite{Kaj1}}} that {\normalfont{(}}twice{\normalfont{)}} the spectral dimension of the Kusuoka Laplacian coincides with the Hausdorff dimension $H$ of $(\KH, d_{geo})$.  Furthermore, in {\normalfont{\cite{Kaj2}}}, he has shown that the Hausdorff measure of $(\KH, d_{geo})$ can be recovered from the leading asymptotics of the Kusuoka Laplacian restricted to an arbitrary {\normalfont{(}}nonempty{\normalfont{)}} open subset of $\KH$ {\normalfont{(}}including, of course, $\KH$ itself{\normalfont{)}}.  These results in {\normalfont{\cite{Kaj2,Kaj3}}} are consistent with the conjectures made in Subsections 7.1 and 7.2 above.  Furthermore, we expect that some of the techniques developed in {\normalfont{\cite{Kaj1, Kaj2, Kaj3}}} will be very useful in addressing and eventually resolving these conjectures, in this and more general settings.

Finally, we note that it is also shown in {\normalfont{\cite{Kaj1, Kaj2, Kaj3}}} that the Hausdorff and Minkowski {\normalfont{(}}i.e., box-counting{\normalfont{)}} dimensions of $(\KH, d_{geo})$ coincide {\normalfont{(}}which is of interest in light of {\normalfont{[31--37]}}, for example{\normalfont{)}}, and that $1<H<2$.
\end{rmk}

\subsection{Energy measure on the gasket}

Based in part on the results of \cite{CrIvLap}, various refinements and extensions of the spectral triples discussed in \cite{CrIvLap} were recently introduced by Erik Christensen, Cristina Ivan, and Elmar Schrohe in \cite{CrIvSc}.  In particular, in \cite{CipGuidIso1}, using the refinements introduced in \cite{CrIvSc}, along with the earlier results and methods of \cite{CrIvLap}, Fabio Cipriani, Daniele Guido, Tommaso Isola and Jean-Luc Sauvageot have shown that the Dirichlet energy form on the Euclidean Sierpinski gasket $K$ can also be recovered from the Dirac operator (and the associated spectral triple) via a suitable Dixmier trace construction.  In light of the results of the present paper and the conjectures made in Subsections 7.1 and 7.2, it is natural to expect that the results of \cite{CipGuidIso1} can be extended to the harmonic gasket (as well as eventually, more general fractals). Namely, conjecturally, not only the Hausdorff dimension and Hausdorff measure of $(\KH, d_{geo})$, but also the energy form on the gasket can be recovered (via a Dixmier trace construction) from a suitable modification of the spectral triples discussed in this paper and in Subsection 7.2 above.  In the process of establishing such a result, it would be helpful to further examine the potential connections between the Dirichlet form, the harmonic geodesic metric on $\KH$, and the effective resistance metric (or intrinsic metric) on $K$, as transported to $\KH$ via the homeomorphism $\Phi$ (see \cite{Kig2, Kig3}).\\

Finally, we note that the modification in \cite{CrIvSc} of the spectral triple constructed in \cite{CrIvLap} may be better suited to the study of the noncommutative topology and $K$-homology of the fractals studied in the present paper, particularly for the harmonic gasket $\KH$ (once our own extended construction has been taken into account).  This question remains to be explored, in conjunction with suitable modifications of the various spectral triples constructed in this paper, including in Section 6. 

\subsection{Geometric analysis on the harmonic gasket}

It would be interesting to further develop geometric analysis on the harmonic Sierpinski gasket $\KH$, viewed as a measurable Riemannian manifold (in the sense of \cite{Kig4, Kig5}).  In the long term, one should be able to extend to this setting the differential calculus on smooth (Riemannian) manifolds, including the notions of differential forms and (metric) connections.  At least for this important special example, this would be a significant step towards realizing aspects of the research program outlined in [32--36].  (The recent results obtained in \cite{CipGuidIso2} for differential 1-forms on the Euclidean gasket may be useful in this setting;  see also \cite{CipSauv} along with the survey article \cite{HiTep} and the relevant references therein.)  Again, in the long term, we expect aspects of geometric analysis to be developed from the present perspective on a broad class of fractal manifolds.

{}








\end{document}